\colorlet{VeryRed}{red!90!black!80}
\def\soi#1#2{\left<#1,#2\right]}
\def\ci#1#2{\left[#1,#2\right]}
\def\eps{\varepsilon}
\pgfplotsset{width=4.2cm}
\pgfplotsset{compat=1.16}
\renewcommand{\epsilon}{\varepsilon}
\newcommand{\euler}{\mathrm{e}}
\newcommand{\R}{R}
\newcommand{\Eins}{\chi}
\newcommand{\drm}{\mathrm{d}}
\newcommand{\RR}{\mathbb{R}}
\newcommand{\NN}{\mathbb{N}}
\newcommand{\Id}{\operatorname{Id}}
\newcommand{\dom}{\operatorname{Dom}}
\newcommand{\ran}{\operatorname{Ran}}
\newcommand{\cH}{\mathcal{H}}
\newcommand{\uhat}{u^{\mathrm{opt}}}
\newcommand{\utilde}{u^{\mathrm{min}}}
\newcommand{\yhat}{y^{\mathrm{opt}}}
\newcommand{\ytilde}{y^{\mathrm{min}}}
\newcommand{\chat}{c^{\mathrm{opt}}}
\newcommand{\yhoms}{y^{*,\mathrm{hom}}}
\newcommand{\whom}{w^{\mathrm{hom}}}
\newcommand{\muhat}{\mu^\epsilon}
\newcommand{\opb}{\Psi}
\newcommand{\vb}{\psi}
\newcommand{\gammahat}{\gamma^{\mathrm{opt}}}
\newcommand{\funomega}{h}
\newcommand{\wlim}{\operatorname*{w-lim}}
\DeclareMathOperator*{\argmin}{arg\,min}
\title{Optimal control of parabolic equations -- a spectral calculus based approach}
\author{Luka Grubi\v{s}i\'c\thanks{University of Zagreb, Department of Mathematics, Croatia 
  (\email{luka@math.hr}, \url{https://www.pmf.unizg.hr/math/luka.grubisic}).}
\and 
Martin Lazar\thanks{University of Dubrovnik, Department of Electrical Engineering and Computing, Croatia 
  (\email{mlazar@unidu.hr}, \url{http://www.martin-lazar.from.hr}).}
\and  
Ivica Naki\'c\thanks{University of Zagreb, Department of Mathematics, Croatia 
  (\email{nakic@math.hr}, \url{https://web.math.pmf.unizg.hr/\string~nakic}).}
\and
Martin Tautenhahn\thanks{Universit\"at Leipzig, Fakult\"at f\"ur Mathematik und Informatik, Germany}
(\email{martin.tautenhahn@uni-leipzig.de}, \url{https://home.uni-leipzig.de/mtau/}).}
\begin{document}

\maketitle
 \begin{abstract}
  In this paper we consider a constrained parabolic optimal control problem. The cost functional is quadratic and it combines the distance of the trajectory of the system from the desired evolution profile together with the cost of a control. The constraint is given by a term measuring the distance between the final state and the desired state towards which the solution should be steered. The control enters the system through the initial condition. We present a geometric analysis of this problem and provide a closed-form expression for the solution. This approach allows us to present the sensitivity analysis of this problem based on the resolvent estimates for the generator of the system.
The numerical implementation is performed by  exploring  efficient rational Krylov approximation techniques that allow us to approximate a complex function of an operator by a series of linear problems. Our method does not depend on the actual choice of discretization. The main approximation task is to construct an efficient rational approximation of a generalized exponential function. 
  It is well known that this class of functions allows exponentially convergent rational approximations, which, combined with the  sensitivity   analysis of the closed form solution, allows us to present a robust numerical method. Several case studies are presented to illustrate our results. \end{abstract}

\begin{keywords}
optimal control, parabolic equations, convex optimization, Krylov spaces, functions of operators, spectral calculus 
\end{keywords}
\begin{AMS}
49N05, 49K20, 49M41, 65F60
\end{AMS}

\tableofcontents
%
%
%

\section{Introduction}
In this paper we consider an optimal control problem for a general linear parabolic equation governed by a self-adjoint operator on  an abstract Hilbert space. 
The task consists in identifying a control (entering the system through the initial condition) that  minimizes a given cost functional, while steering the final state at time $T>0$ close to the given target. The functional comprises of the control norm and an additional term penalizing the distance of the state from the desired trajectory.

This can be considered as an inverse problem (of initial source identification) for parabolic equations from the optimal control viewpoint. It is an important, but also  numerically challenging issue due to the dissipative nature of such equations.  It has been addressed by different methods, some including optimization and optimal control techniques \cite{FPZ95,MV07,LOT14,CVZ15}.

Optimal control problems with control in  initial conditions are less investigated than
distributed or boundary control problems.  The latter  contain controls acting along the whole time interval $[0, T]$. Such a setting is not the subject of this paper, but we refer an interested reader to  \cite{T10}, containing  a quite clear and detailed exposition of the
topic.

Our problem can be treated by exploring the  Fenchel-Rockafellar duality for convex optimisation (cf.\ \cite[Section 3.6]{Peypouquet-15}).
If the cost functional consists of the control cost only, the problem is reduced to the classical minimal norm control problem which can be treated by the Hilbert uniqueness
method.
In the seminal work \cite{CGL94} this approach is
used to transform the boundary control problems into identification problems for initial data of the adjoint heat equation, better suited to numerical methods than
the original problems. A more recent paper \cite{FM14} generalizes this method by considering cost functional including the state in addition to the control.
In order to explore efficient optimization methods, in both papers the authors approximate an original problem with a constraint on the
final state, by an unconstrained one containing a penalisation term.
The solution is then obtained by letting the penalisation constant
blow up. Similar techniques are applied in \cite{B13,FZ00}.
However,  this approach does not provide an a-priori estimate on the deviation of the final state from the given target.

In order to numerically recover the control minimising the functional of interest, most of the authors involve finite difference and/or finite element discretisation
and employ some iterative scheme (e.g. conjugate gradient), usually including the dual problem. 
Classical convex optimization techniques in Hilbert spaces (e.g. \cite{Bauschke17,Peypouquet-15}) also provide iterative
methods that can be applied to our problem. Of course, these iterative techniques come with a significant computational cost, which increases with the system dimension. 

In this paper we propose a different approach  based on the spectral calculus for self-adjoint operators and a geometrical representation of the problem. 
First, we obtain closed-form expression for the control solution as a function of the self-adjoint operator governing the dynamics of the system. This expression is almost   explicit, up to a scalar  factor ensuring that the deviation of the final state from the given target is within the prescribed tolerance.  Once the equation for this scalar unknown is solved, the method provides a direct, one-shot formula for the solution. Its numerical computation is  achieved by exploring  efficient rational Krylov approximation techniques for resolvents from \cite{BK17}, by which one constructs a rational approximant of the aforementioned function of the operator.

The proposed method and the obtained formula are given in the abstract Hilbert space framework and can be applied to optimal control problems for a large class of linear parabolic PDEs for which there exist efficient resolvent approximation algorithms. To illustrate our methods we treat optimization problems for 1D and 2D heat equations.

Our approach is an extension of the result from \cite{LazarMP-17},  where the authors  explore the spectral representation of the solution by eigenfunctions of the operator governing the system dynamics. This eventually leads to an explicit expression  (up to a scalar factor) of the optimal final state  and the optimal control.
The obtained formulae are  spectrally decoupled meaning that the $n$-th Fourier coefficient is fully expressed by the corresponding coefficients of the given data: the final target and desired trajectory. However, the practical implementation of the algorithm is constrained by the availability of the spectral decomposition of the operator. For general PDE operators with variable coefficients and/or acting on irregular domains the decomposition is in general not available or hard to construct. Also, this construction requires costly computations that can exceed the gain provided by the efficiency of the obtained formula. 

On the other side, the method proposed in this paper is applicable to more complex settings. It allows to efficiently treat PDEs with variable coefficients and defined on complicated domains. In addition, it is robust with respect to small perturbations of both the system and the cost functional, and we provide estimates on  deviation of the original solution  from the perturbed one. This is quite important in  applications, as in practice the models of interest are often not completely determined, subject to unknown or uncertain parameters, either of deterministic or  stochastic nature.  Furthermore, an expansion of a state in eigenfunctions typically converges rather slowly except in very specific cases. In comparison, representations of solutions using Krylov subspaces are much more efficient in the number of required terms.
  
The paper is organised as follows. In the next section we formulate the problem and state the main result (Theorem \ref{thm:solution_final_state}). In Section~\ref{sec:sensitivity_analysis} we present the sensitivity analysis which  justifies a finite-dimensional approximation of the problem.  In Section~\ref{Sec:rat} we present the rational function approximation theory and discuss the stability of the finite element approximation of the problem. Further, we discuss the relationship between numerical rational functions calculus as realized by the \texttt{rkfit} algorithm \cite{BK17} and the approximation problem for the generalized exponential functions which appear as central for the study of the concrete numerical examples. In Section~\ref{sec:examples} we present 1D and 2D numerical examples which are outside the scope of the original eigendecomposition method from \cite{LazarMP-17}. Within the concluding remarks we discuss efficiency of the introduced method, open perspectives and comparison to other approaches.

%
%
%
%
%
%
%
\section{Setting of the problem and characterisation of the solution}
Let $\cH$ be a Hilbert space and $A$ be an upper-bounded self--adjoint operator in $\cH$ with an upper bound $\kappa$, i.e.\ $\max \sigma (A)\le \kappa$. We denote by $(S_t)_{t \geq 0}$ the semigroup generated by $A$. We consider for $f\in L^2((0,\infty);\cH)$ and $u \in \cH$ the Cauchy problem
\begin{equation} \label{eq:Cauchy}
 \begin{cases}
 y' (t)  = A y (t) + f(t),  & t > 0 ,  \\
 y (0) = u .
 \end{cases}
\end{equation}
Note that the mild solution of \eqref{eq:Cauchy} is given by 
\[ 
y(t) = S_t u + \int_0^t S_{\tau}f(t-\tau)\drm \tau ,\quad t \geq 0,
\]
and is an element of $L^2((0,\infty);\cH$), see e.g.\ \cite{EngelN1999}. 
We say that the system \eqref{eq:Cauchy} is controllable to a target state $y^* \in \cH$ in time $T > 0$ if there is $u \in \cH$ such that 
\[
 S_T u +  \int_0^T S_{\tau}f(T-\tau)\drm \tau = y^* .
\]
We say that the system \eqref{eq:Cauchy} is approximately controllable in time $T > 0$ if for all $y^* \in \cH$ and all $\epsilon > 0$ there exists $u \in \cH$ such that
\begin{equation} \label{eq:approx}
 \Bigl\lVert S_T u + \int_0^T S_{\tau}f(T-\tau)\drm \tau  - y^* \Bigr\rVert \leq \epsilon .
\end{equation}

\begin{remark} \label{St-properties}
Let us note that for any $T>0$ the  operator $S_T$ is injective with a dense range. For the reader's convenience, we present a short proof. For the injectivity assume that there exists $0 \ne x \in \cH$ such that $S_T x = 0$. The semigroup property immediately implies that $S_t x = 0$ for all $t > T$. Let $0 < t < T$ be arbitrary. Then there exists $k\in \mathbb{N}$ such that $2^k t > T$ and hence $S_{2^k t} x = 0 $. Since $S_t$ is a non--negative operator, we have $0 = \langle S_{2^k t} x, x\rangle = \lVert S_{2^{k-1}t} x\rVert^2 $, hence $S_{2^{k-1}t} x = 0$. Now by induction it follows $S_t x = 0$. Since $( S_t )_{t\geq 0}$ is a $C_0$-semigroup, we obtain $0 = \lim_{t\to 0} S_t x = x$, a contradiction. Note that this also implies that $\ran(S_T)$ is a dense subspace of $\cH$ as $S_T$ is a self--adjoint operator. 
\end{remark}
\medskip

Since the range of $S_T$ is dense in $\cH$  the system \eqref{eq:Cauchy} is indeed approximately controllable in any time $T > 0$.
In the class of initial values satisfying \eqref{eq:approx} for a given target $y^* \in \cH$ and time $T > 0$,  we are looking for those with minimal cost. More precisely, for $\epsilon,T > 0$ and $y^* \in \cH$ we introduce the problem
\begin{equation} \label{eq:problem}
 \min_{u \in \cH} \left\{ J(u) \colon \Bigl\lVert S_T u + \int_0^T S_{\tau}f(T-\tau)\drm \tau - y^* \Bigr\rVert \leq \epsilon \right\}
\end{equation}
where
\[
 J (u) =  \frac{\alpha}{2} \lVert u \rVert^2 + \frac{1}{2} \int_0^T \beta (t) \Bigl\lVert S_t u + \int_0^t S_{\tau}f(t-\tau)\drm \tau - w(t) \Bigr\rVert^2 \drm t ,
\]
$\alpha > 0$ and  $\beta \in L^\infty ((0,T) ; [0,\infty))$ are weights of the cost, and $w \in L^2 ((0,T) ; \cH)$ is the target trajectory. 

Of course, the notation used  can be somewhat simplified by substituting $S_t u + \int_0^t S_{\tau}f(t-\tau)\drm \tau $ with $y(t)$, but we want to keep the formulation that explicitly shows  dependence of the problem on  the given data  $f, y^*$ and $w$, and the unknown control $u$.

For $\epsilon > 0$ and $x \in \cH$ we denote by $B_\epsilon (x) = \{ y \in \cH \colon \lVert y-x \rVert \leq \epsilon \}$ the closed ball of radius $\epsilon$ and center $x$. Our problem \eqref{eq:problem} can be restated as
\begin{equation} \label{p2}
\min_{u\in \cH} \left\{J\left(u\right) + I_{B_\epsilon (y^*)}\left(\mathcal{S}_T u+ \int_0^T S_{\tau}f(T-\tau)\drm \tau \right) \right\},
\end{equation}
where $I_{B_\epsilon (y^*)}$ is the corresponding \emph{indicator function} defined as
\begin{equation*} 
I_{B_\epsilon (y^*)}  \left(y\right)=
\begin{cases}
0 &\mbox{if } y\in  B_\epsilon (y^*), \\
+\infty &\mbox{else}.
\end{cases}
\end{equation*}
Since the function $u \to J(u)+I_{B_\epsilon (y^*)}\circ(\mathcal{S}_T u+ \int_0^T S_{\tau}f(\tau)\drm \tau) $ is proper, strongly convex and lower-semicontinuous, problem \eqref{eq:problem} has a unique solution, which we denote by $\uhat $ (see, for instance, \cite[Corollary 2.20]{Peypouquet-15}). 
Moreover, we define
\[
 \utilde  = \argmin_{u \in \cH} J (u) ,
\]
as the solution to the corresponding unconstrained problem, while by 
$\ytilde  = S_T u^{\mathrm{min}} + \int_0^T S_{\tau}f(T-\tau)\drm \tau$ and $\yhat  = S_T \uhat  + \int_0^T S_{\tau}f(T-\tau)\drm \tau$ we denote the corresponding optimal final states obtained from $\utilde $ and $\uhat $, respectively. 

\begin{remark} 
 Regarding the results we use from \cite{Peypouquet-15}, we note that they are stated in the case of real Hilbert spaces only. However, they carry over to the complex case by realifying the Hilbert space $\cH$ and taking the real part of the inner product instead of the (complex) inner product.
\end{remark} 
\medskip

The problem \eqref{p2} has a form of the composite optimization problem \cite{Peypouquet-15}. That is to say that the target functional is a sum of a quadratic function and a ``simple'' function, e.g. a composition with an indicator function. Such problems -- when posed in the correct abstract setting -- are typically solved by methods based on proximal operator. Instead, we use the spectral calculus to explicitly construct an operator theoretic representation of the trajectories, cf.\ Remark~\ref{rem:2.8} and Section~\ref{Sec:rat}. Interestingly, the structure of the abstract composite optimization problem is still preserved in the solution formula. We will comment on this explicitly in Remark~\ref{rem:proximal} after the statement of the main theorem.

\par
We define $\yhoms = y^* - \int_0^T S_{\tau}f(T-\tau)\drm \tau$ and $\whom = w - \int_0^{\boldsymbol{\cdot}} S_{\tau}f(\boldsymbol{\cdot} - \tau)\drm \tau$. Then our problem~\eqref{eq:problem} can be written as
\begin{equation}
\label{eq:problemh}
   \min_{u \in \cH} \left\{ J(u)  \colon \lVert S_T u  - \yhoms \rVert \leq \epsilon \right\}, 
\end{equation}
where
\[
   J (u) =  \frac{\alpha}{2} \lVert u \rVert^2 + \frac{1}{2} \int_0^T \beta (t) \lVert S_t u - \whom(t) \rVert^2 \drm t .
\]
Note that $S_t u$ is the solution of the corresponding homogeneous Cauchy problem with $f=0$. 
\par
If $\epsilon \geq \lVert \ytilde - y^* \rVert$ it follows that the solution of \eqref{eq:problemh} (and hence also of \eqref{eq:problem}) satisfies $\uhat = \utilde$. The following theorem covers the non-trivial case $0 < \epsilon < \lVert \ytilde - y^* \rVert$ as well.

\begin{theorem} \label{thm:solution_final_state}
 Let $T,\epsilon > 0$ and $y^* \in \cH$. 
 Then the optimal initial state $\uhat$ is given by
  \begin{equation} 
 \label{sol_form}
 \uhat = ( \muhat S_{2T} + \opb)^{-1} (\muhat S_T \yhoms +  \vb) ,
\end{equation}
where
\[
 \opb = \alpha \Id  + \int_0^T \beta (t) S{}_{2t} \drm t, 
 \quad
 \vb = \int_0^T \beta (t) S_{t} \whom(t) \drm t ,
\]
and $\muhat \ge 0$ is the unique solution of $\Phi (\mu) =  \epsilon$ if $\epsilon < \lVert \ytilde - y^* \rVert = \lVert \Psi^{-1}S_T \psi - \yhoms \rVert $, and zero otherwise. 
Here 
$\Phi \colon [0,\infty) \to [0,\infty)$ is the function defined  by 
  \begin{equation} \label{eq:Phi}
   \Phi(\mu) = \lVert \yhoms -  (\mu S_{2T} +  \opb)^{-1} (\mu S_{2T} \yhoms + S_T\vb) \rVert .
  \end{equation}
\end{theorem}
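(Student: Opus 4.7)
The plan is to dispatch the trivial case $\epsilon\ge\lVert\ytilde-y^*\rVert$ by inspection and reduce the nontrivial case $0<\epsilon<\lVert\ytilde-y^*\rVert$ to a Lagrangian/KKT analysis, afterwards reading off the scalar equation for $\muhat$ via the spectral calculus for $A$. I work throughout in the homogeneous formulation \eqref{eq:problemh}, eliminating $f$ from the outset. When the unconstrained minimum already satisfies the tolerance, $\uhat=\utilde$, and setting $\muhat=0$ in \eqref{sol_form} recovers $\utilde=\opb^{-1}\vb$, which is the first-order condition for the quadratic $J$.

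For the nontrivial case the constraint must be active at $\uhat$. I would form the Lagrangian $L(u,\mu)=J(u)+\tfrac{\mu}{2}\bigl(\lVert S_Tu-\yhoms\rVert^2-\epsilon^2\bigr)$ with $\mu\ge 0$. Strong convexity of $J$ together with a Slater condition --- the interior of $B_\epsilon(\yhoms)$ contains an attainable state, as $\ran(S_T)$ is dense by Remark \ref{St-properties} --- yields strong duality, so the KKT conditions are both necessary and sufficient. The stationarity condition $\partial_u L(\uhat,\muhat)=0$ reads
\[
\alpha\uhat+\int_0^T\beta(t)S_t(S_t\uhat-\whom(t))\drm t+\muhat S_T(S_T\uhat-\yhoms)=0,
\]
and grouping the $\uhat$-terms yields $(\opb+\muhat S_{2T})\uhat=\vb+\muhat S_T\yhoms$. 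The left-hand operator is self-adjoint and bounded below by $\alpha>0$, hence invertible, and inverting it gives exactly \eqref{sol_form}.

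It remains to characterise $\muhat$. Since $S_T$, $S_{2T}$ and $\opb$ are bounded Borel functions of the self-adjoint $A$, they mutually commute; applying $S_T$ to \eqref{sol_form} and commuting past the resolvent gives $S_T\uhat=(\muhat S_{2T}+\opb)^{-1}(\muhat S_{2T}\yhoms+S_T\vb)$, so the active-constraint condition becomes $\Phi(\muhat)=\epsilon$. A one-line algebraic manipulation recasts the expression in \eqref{eq:Phi} as $(\mu S_{2T}+\opb)^{-1}\xi$ with $\xi:=\opb\,\yhoms-S_T\vb$, and the spectral theorem for $A$ then yields
\[
\Phi(\mu)^2=\int_{\sigma(A)}\frac{\drm\mu_\xi(\lambda)}{(\mu e^{2T\lambda}+q(\lambda))^2},\qquad q(\lambda)=\alpha+\int_0^T\beta(t)e^{2t\lambda}\drm t\ge\alpha,
\]
where $\mu_\xi$ denotes the scalar spectral measure of $\xi$ under $A$. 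Because $\lambda\le\kappa$ forces $e^{2T\lambda}>0$ and the integrand is dominated by $\alpha^{-2}\in L^1(\mu_\xi)$, dominated convergence delivers continuity of $\Phi$ and $\Phi(\mu)\to 0$ as $\mu\to\infty$; pointwise strict monotonicity in $\mu$ gives strict decrease of $\Phi$ as long as $\xi\neq 0$, and indeed $\Phi(0)=\lVert\opb^{-1}\xi\rVert=\lVert\yhoms-S_T\utilde\rVert=\lVert\ytilde-y^*\rVert>\epsilon$. Thus $\Phi(\mu)=\epsilon$ has a unique root $\muhat\in(0,\infty)$. The main obstacle I anticipate is the careful justification of the KKT framework (a constraint qualification making the first-order conditions both necessary and sufficient) together with the spectral-calculus manipulation that renders the monotonicity and limits of $\Phi$ transparent; the remaining algebra is largely bookkeeping on commuting functions of $A$.
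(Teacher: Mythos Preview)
Your proposal is correct and follows the paper's Lagrangian strategy: handle the trivial case by inspection, argue the constraint is active in the nontrivial case, read off \eqref{sol_form} from the stationarity condition, and determine $\muhat$ via $\Phi(\muhat)=\epsilon$. The paper isolates the active-constraint step as a separate lemma (an interior optimum would be a local, hence global, unconstrained minimizer of $J$, contradicting $\epsilon<\lVert\ytilde-y^*\rVert$), whereas you obtain it from KKT complementary slackness together with Slater; these are equivalent formulations.

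The one genuine difference lies in the analysis of $\Phi$. The paper proves the required properties by an operator-calculus differentiation, obtaining $(\Phi^2)'(\mu)=-2\lVert S_T(\mu S_{2T}+\opb)^{-3/2}(\opb\yhoms-S_T\vb)\rVert^2\le 0$ for monotonicity, and then establishes $\Phi(\mu)\to 0$ through two separate density arguments (on $\dom(S_{2T}^{-1}\opb)$ for the $\yhoms$-term and on $\ran(S_T)$ for the $\vb$-term). Your spectral-integral representation $\Phi(\mu)^2=\int_{\sigma(A)}(\mu e^{2T\lambda}+q(\lambda))^{-2}\,\drm\mu_\xi(\lambda)$, combined with the uniform bound $\alpha^{-2}$ and dominated convergence, delivers continuity, strict decrease (since $\xi\neq 0$ forces $\mu_\xi\neq 0$), and the limit $\Phi(\mu)\to 0$ in a single stroke. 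Your route is more compact and makes the commuting-functions-of-$A$ structure explicit; the paper's argument avoids introducing the scalar spectral measure but is correspondingly longer.
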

\begin{remark}
\label{u_min}
 Since $\Psi$ is positive definite, we indeed have that $\Psi$ and $\mu S_{2T} + \Psi$, $\mu \geq 0$, are invertible. 
 Moreover, for the functional $J$ we have $\nabla J(u)= \Psi u - \psi$. As $\utilde$ is its global minimizer, it immediately  follows  that $\utilde = \opb^{-1}\vb$, and thus $\lVert \Psi^{-1}S_T \psi - \yhoms \rVert = \lVert \ytilde - y^* \rVert$. 
\end{remark} \medskip
\begin{remark}
For $\epsilon <    \lVert \ytilde - y^* \lVert$ we obtain from \eqref{sol_form} and \eqref{eq:Phi}
\[
 \Phi(\muhat) = \lVert y^* -  \yhat \rVert = \epsilon.
 \]
 In other words, if the global minimizer $\utilde$ of the unconstrained problem does not drive the system to the target ball $B_\epsilon (y^*)$, then  the optimal final state lies on the boundary of this ball, cf.\ Lemma~\ref{lem:yopt}. This is in accordance with previous results on similar problems (e.g. \cite[Proposition 2.1]{LazarMP-17} and \cite[Theorem 2.4]{CVZ15}) that provide  the same characterisation of the optimal solution.  
\end{remark} \medskip

\begin{remark} 
Let $\phi(\mu) = \yhoms -  (\mu S_{2T}+  \opb)^{-1} (\mu S_{2T} \yhoms + S_T\vb)$, hence $\Phi(\mu) = \lVert \phi(\mu) \rVert $. Then $\phi(\mu) = \yhoms - x$,  where $x$ is the solution of the equation
\begin{equation}
\label{eq:linsym-mu}
\left(\mu S_{2T} + \opb \right) x =  \mu S_{2T }\yhoms + S_T \vb,
\end{equation}
hence the calculation of $\Phi(\mu)$ reduces to solving a linear equation.
Note also that the optimal initial state $\uhat$ is the solution of the equation
\begin{equation}
\label{eq:linsym-final}
\left(\muhat S_{2T} +\opb \right) x = \muhat S_{T} \yhoms + \vb.
\end{equation}
\end{remark} \medskip
\begin{remark}
\label{rem:proximal}
 In order to give some geometrical intuition for the constrained optimization problem that we solve in the Hilbert space setting, let us  observe a formal similarity of the result of Theorem \ref{thm:solution_final_state} and the known finite dimensional result \cite{ProximalOP}. Let $\lambda>0$ and let $A\in\mathbb{R}^{m\times n}$ be of full rank. Then for a given $x \in \RR^n$
  \[
  u_x = \argmin_{u \in \RR^n}\left\{\lambda\lVert Au \rVert +\frac{1}{2} \lVert u-x \rVert^2\right\}
  \]
  is given by the formula
  \[
  u_x
  =
  \begin{cases}
  x-A^*(AA^*)^{-1}Ax,& \text{if} \ \lVert (AA^*)^{-1}Ax \rVert \leq\lambda, \\
  x-A^*(AA^*+\alpha^* \Id)^{-1}Ax& \text{if} \ \lVert (AA^*)^{-1}Ax \rVert >\lambda.
  \end{cases}
  \]
  Here $\alpha^*$ is the unique positive root of the decreasing function
  \[
  \phi(\alpha)= \lVert (AA^*+\alpha \Id)^{-1}Ax \rVert^2-\lambda^2 .
  \]  
  The function $\phi$ takes the roll of $\Phi$ from \eqref{eq:Phi}.
\end{remark} \medskip
\begin{remark}\label{rem:2.8}
To calculate $\opb$ we will use the fact that it can be written as a function of $A$ by using
\[
\int_0^T \beta(t) S_{2t} \drm t = \int_{- \infty}^{\infty} \int_0^T \beta(t) \exp(-2 t \lambda) \drm t \, \drm (E(\lambda)) = \tilde \beta_0(A),
\]
where $\tilde \beta_0$ is a function given by $\tilde \beta_0(\lambda) = \int_{0}^{T} \beta(t) \exp(- 2 t \lambda) \drm t$ and $E$ is the spectral measure of $A$.

However such approach does not work directly with other term entering the formula for the  solution \eqref{sol_form}. Namely, it is not possible to find a nice closed formula for $\vb$ except in special situations. But we can always find a good approximant for $\vb$. Let $\tilde w (t) = \sum_{i=1}^N w_i \chi_{[t_{i-1},t_i]} $ be an approximation of $w$, where $0 = t_0 < t_1 < \cdots < t_N = T$, $w_i \in \cH$, $i=1,\ldots, N$, and $\chi_S$ is the characteristic function of the set $S$. Then 
\[
\tilde\vb = \sum_{i=1}^N \tilde \beta_i(A) w_i, \text{ where } \tilde \beta_i (\lambda) =  \int_{t_{i-1}}^{t_i} \beta(t) \exp(- t \lambda) \drm t, 
\]
is an approximation of $\vb$.

If the function $\beta$ is such that we can not explicitly calculate $\tilde \beta_i$, $i = 0,\ldots,N$, we can still find appropriate approximations of $\opb$ and $\vb$ by finding appropriate approximations of $\tilde \beta_i$, $i = 0,\ldots,N$.
 \end{remark} \medskip
 
 Before proving Theorem \ref{thm:solution_final_state} we provide two auxiliary results. 
 \begin{lemma}
 \label{lem:yopt}
 Let $\epsilon > 0$ and $\lVert \ytilde-y^* \rVert > \eps$, then the optimal final state verifies $\lVert y^*  - \yhat \rVert = \epsilon$, i.e. $\yhat$ lies on the boundary of the target ball. 
  \end{lemma}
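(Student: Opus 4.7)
The plan is a standard convex-analysis contradiction argument exploiting the strict convexity of $J$ and the affine dependence of the final state on the initial control.

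First I would reformulate the setup. Under the assumption $\lVert \ytilde - y^* \rVert > \epsilon$, the unconstrained minimizer $\utilde$ is infeasible for \eqref{eq:problem}, hence $\uhat \ne \utilde$. Since $J$ is strictly convex with unique global minimizer $\utilde$, this forces $J(\utilde) < J(\uhat)$. Suppose for contradiction that $\lVert y^* - \yhat \rVert < \epsilon$ (feasibility of $\uhat$ gives the non-strict inequality; we rule out the strict interior case).

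Next I would consider the convex combination $u_\lambda = (1-\lambda)\uhat + \lambda \utilde$ for $\lambda \in (0,1)$. Since the map $u \mapsto S_T u + \int_0^T S_\tau f(T-\tau)\drm \tau$ is affine, the corresponding final state is $y_\lambda = (1-\lambda)\yhat + \lambda \ytilde$, so the triangle inequality yields
\[
 \lVert y^* - y_\lambda \rVert \le (1-\lambda)\lVert y^* - \yhat\rVert + \lambda \lVert y^* - \ytilde\rVert.
\]
Because $\lVert y^* - \yhat\rVert < \epsilon$, the right-hand side is continuous in $\lambda$ and equals $\lVert y^* - \yhat\rVert < \epsilon$ at $\lambda=0$, hence stays below $\epsilon$ for all sufficiently small $\lambda > 0$. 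Thus $u_\lambda$ is feasible for such $\lambda$.

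Finally, convexity of $J$ gives $J(u_\lambda) \le (1-\lambda) J(\uhat) + \lambda J(\utilde) = J(\uhat) - \lambda\bigl(J(\uhat) - J(\utilde)\bigr) < J(\uhat)$ for $\lambda > 0$, using $J(\utilde) < J(\uhat)$. This contradicts the optimality of $\uhat$ in \eqref{eq:problem}, so $\lVert y^* - \yhat \rVert = \epsilon$ must hold. There is no real obstacle here; the only subtlety is remembering to separately note that $\uhat \ne \utilde$ under the hypothesis (so that the strict inequality $J(\utilde) < J(\uhat)$ is available), which follows immediately because $\utilde$ violates the constraint.
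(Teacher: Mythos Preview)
Your proof is correct and follows essentially the same contradiction strategy as the paper: assume the constraint is inactive and derive that $\uhat$ would coincide with the unconstrained minimizer $\utilde$. The only cosmetic difference is that the paper argues via ``open neighborhood of feasible controls $\Rightarrow$ $\uhat$ is a local, hence global, minimizer of $J$'', whereas you construct the improving direction explicitly along the segment toward $\utilde$; both are standard and equivalent convexity arguments.
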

  \begin{proof}
  Let us suppose the contrary, that $\yhat \in B_{\eps}(y^*)$. Then it exists $\eta>0$ such that $B_{\eta}(\yhat )\subset B_{\eps}(y^*)$ and then, by continuity of $S_T $, a $\delta>0$ such that $S_T B_{\delta} (\uhat)+ \int_0^T S_{\tau}f(T-\tau)\drm \tau \subset B_{\eta}(\yhat ) \subset B_{\eps}(y^*)$. In particular, every $u\in B_{\delta} (\uhat)$ is a feasible control for the problem \eqref{eq:problem}. As $\uhat$ is the solution of the same problem, it holds  $J(\uhat)\leq J(u)$ for every $u\in B_{\delta} (\uhat)$. But, by the convexity of $J$, a local minimizer is also global. Then $\uhat$ is solution of the unconstrained problem, which contradicts the assumption $\epsilon < \lVert \ytilde - y^* \rVert$. 
 \end{proof}

  \begin{lemma}
  \label{lem:Phi}
The function $\Phi$ has the following properties:
  \begin{enumerate}[(a)]
    \item If $0  < \lVert y^* -\ytilde \rVert$, then $\Phi$ is a strictly decreasing function,
    \item $\lim_{\mu\to \infty}\Phi (\mu) =  0$,
    \item $ \Phi (0) =  \lVert  y^* -\ytilde  \rVert$.
  \end{enumerate}
\end{lemma}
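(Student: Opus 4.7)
The plan is to first rewrite $\Phi$ so the $\mu$-dependence is isolated in a single resolvent, and then read off all three items via the spectral calculus for $A$. Adding and subtracting $(\mu S_{2T}+\opb)\yhoms$ inside the inverse yields
\[
(\mu S_{2T}+\opb)^{-1}\bigl(\mu S_{2T}\yhoms+S_T\vb\bigr)=\yhoms-(\mu S_{2T}+\opb)^{-1}v,
\]
with the $\mu$-independent vector $v:=\opb\,\yhoms-S_T\vb$. Since $\opb\ge\alpha\,\Id$ and $\mu S_{2T}\ge 0$ (all three operators are bounded nonnegative functions of $A$), $\mu S_{2T}+\opb$ is boundedly invertible for every $\mu\ge 0$, and I obtain the clean representation $\Phi(\mu)=\lVert(\mu S_{2T}+\opb)^{-1}v\rVert$.

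For part (c) I plug in $\mu=0$: the representation gives $\Phi(0)=\lVert\opb^{-1}v\rVert=\lVert\yhoms-\opb^{-1}S_T\vb\rVert$. As $\opb$ and $S_T$ are both bounded Borel functions of the self-adjoint operator $A$ they commute, hence $\opb^{-1}S_T\vb=S_T\opb^{-1}\vb=S_T\utilde$ by Remark~\ref{u_min}. Because $\yhoms$ and $\ytilde$ differ from $y^\ast$ and $S_T\utilde$ by the same inhomogeneous integral $\int_0^T S_\tau f(T-\tau)\drm\tau$, this collapses to $\Phi(0)=\lVert y^\ast-\ytilde\rVert$.

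For (a) and (b) I pass to the scalar spectral measure $\mu_v:=\langle E(\cdot)v,v\rangle$, where $E$ is the spectral resolution of $A$. With $p(\lambda):=\alpha+\int_0^T\beta(t)e^{2t\lambda}\drm t$ one has $\opb=p(A)$ and $S_{2T}=e^{2TA}$ in the functional calculus, so
\[
\Phi(\mu)^{2}=\int_{(-\infty,\kappa]}\frac{1}{\bigl(\mu\,e^{2T\lambda}+p(\lambda)\bigr)^{2}}\,\drm\mu_v(\lambda).
\]
For every fixed $\lambda$ the integrand is strictly decreasing in $\mu$ (because $e^{2T\lambda}>0$), tends to $0$ as $\mu\to\infty$, and is dominated by $1/\alpha^{2}$, which is $\mu_v$-integrable since $\mu_v((-\infty,\kappa])=\lVert v\rVert^{2}<\infty$. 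Dominated convergence then yields (b). For (a) the hypothesis $\lVert y^\ast-\ytilde\rVert>0$ together with (c) and the formula $\Phi(0)=\lVert\opb^{-1}v\rVert$ forces $v\neq 0$, so $\mu_v$ is a nonzero measure, and the strict pointwise monotonicity of the integrand transfers to the integral.

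The only real obstacle is organisational: one must notice that every operator entering $\Phi$ is a Borel function of $A$, so that the spectral theorem converts $\Phi(\mu)^{2}$ into a scalar integral against $\mu_v$. Once that conversion is made, both the limit $\mu\to\infty$ and the strict monotonicity reduce to elementary facts about the scalar integrand.
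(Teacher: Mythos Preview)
Your proof is correct, and the opening manipulation---rewriting $\Phi(\mu)=\lVert(\mu S_{2T}+\opb)^{-1}v\rVert$ with $v=\opb\,\yhoms-S_T\vb$---is exactly what the paper does as well; part (c) then follows identically in both treatments.

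The genuine difference lies in (a) and especially (b). For (a) the paper differentiates $\Phi^2$ via the resolvent derivative and identifies $(\Phi^2)'(\mu)=-2\lVert S_T(\mu S_{2T}+\opb)^{-3/2}v\rVert^2$, then invokes injectivity of $S_T$ to get strict negativity; you instead pass to the scalar spectral integral $\Phi(\mu)^2=\int(\mu e^{2T\lambda}+p(\lambda))^{-2}\,\drm\mu_v(\lambda)$ and read off strict monotonicity pointwise. Both are clean, but yours avoids the derivative computation entirely. For (b) the contrast is sharper: the paper never writes the spectral integral and instead splits the expression into two operator-theoretic pieces, handling each by separate density arguments (density of $\dom(S_{2T}^{-1}\opb)$ for one term, density of $\ran(S_T)$ for the other). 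Your route---dominated convergence against the uniform bound $1/\alpha^2$ with total mass $\lVert v\rVert^2$---is shorter and more transparent. The paper's argument has the mild advantage of staying at the operator level and not invoking the full spectral representation, but once one accepts that every operator in sight is a bounded Borel function of $A$ (as you note), your approach is the more economical one.
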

\begin{proof}
   Note that the function $\Phi$ can be rewritten as 
   \[\Phi(\mu) = \lVert ( \mu S_{2T} + \opb)^{-1} (\opb  \yhoms - S_T\vb) \rVert.\]
For the derivative of $\mu \mapsto \Phi^2(\mu)$ we have
\begin{align*}
 (\Phi^2)' (\mu) &= -2 \bigl\langle S_{2T} (\mu S_{2T} + \opb)^{-2} ( \opb \yhoms - S_T\vb) , (\mu S_{2T} +   \opb)^{-1} ( \opb \yhoms - S_T\vb) \bigr\rangle \\
 &=  -2 \bigl\lVert S_T (\mu S_{2T} + \opb)^{-3/2} ( \opb \yhoms - S_T\vb)  \bigr\rVert^2 \leq 0  .
\end{align*}
As $(\Phi^2)' = 2 \Phi \Phi'$ and $\Phi$ is a nonegative function, it follows 
that  $\Phi' (\mu) \le  0$ for all $\mu > 0$ and we have strict negativity if $ \opb \yhoms \ne S_T\vb$.  Recall that 
$ \opb \utilde =\vb$ (cf. Remark \ref{u_min}), hence 
$ \opb \ytilde =S_T \vb$. Now $ \opb \yhoms = S_T\vb$ would imply $ \opb\ytilde =  \opb y^*$, a contradiction with the assumption and the invertibility of $ \opb$. 

We now prove (b).
First note
\[
y - (\mu S_{2T} + \opb)^{-1} \mu S_{2T} y = y - \mu (\mu + S_{2T}^{-1} \opb )^{-1}y =  (\mu + S_{2T}^{-1} \opb)^{-1} S_{2T}^{-1} \opb  y  
\]
for all $y\in \dom(S_{2T}^{-1} \opb )$, which implies that we have
$
y - \mu (\mu + S_{2T}^{-1} \opb )^{-1}y = 0
$
for all $y\in \dom(S_{2T}^{-1} \opb )$. Let $(y_n)$ be a sequence from $\dom(S_{2T}^{-1} \opb )$ which converges to $\yhoms$. Let $n\in \mathbb{N}$ be arbitrary. Using $\lVert (\mu + S_{2T}^{-1} \opb)^{-1} \rVert = \mathop{\mathrm{dist}}(\mu, -\sigma (S_{2T}^{-1} \opb))^{-1}\le \mu^{-1} $ it follows
\begin{align*}
&\lim_{\mu\to \infty}  \lVert \yhoms - (\mu S_{2T} + \opb)^{-1} \mu S_{2T} \yhoms \rVert \\ 
&= \lim_{\mu\to \infty} \lVert \yhoms - y_n - \mu (\mu + S_{2T}^{-1} \opb)^{-1}(\yhoms - y_n) + y_n -  \mu (\mu + S_{2T}^{-1} \opb)^{-1}y_n \rVert  \\
&\le 2\lVert \yhoms - y_n \rVert  + \lim_{\mu\to \infty} \lVert y_n -  \mu (\mu + S_{2T}^{-1} \opb)^{-1}y_n   \rVert = 2\lVert \yhoms - y_n \rVert ,
\end{align*}
and taking the limit $n\to \infty$ we obtain
\[
\lim_{\mu\to \infty}  \lVert \yhoms - (\mu S_{2T} + \opb)^{-1} \mu S_{2T} \yhoms \rVert = 0.
\]

Hence to prove (b) we only have to show
\begin{equation}
\label{eq:b-limit}
\lim_{\mu\to \infty} \lVert (\mu S_{2T} +   \opb)^{-1} S_T\vb \rVert = \lim_{\mu\to \infty} \lVert S_T^{-1} (\mu  +  S_{2T}^{-1} \opb)^{-1} \vb \rVert =  0.
\end{equation}
Since $\ran (S_T)$ is dense in $\cH$ (cf. Remark \ref{St-properties}), there exists  a sequence  $(\vb_m)_{m \in \NN}$  in $\ran (S_T)$ such that $\lim_{m\to \infty}\vb_m =\vb$, and let $v_m\in \cH$ be such that $\vb_m = S_T v_m$. Then $\lim_{\mu \to \infty}S_T^{-1} (\mu  +  S_{2T}^{-1} \opb)^{-1}\vb_m = \lim_{\mu \to \infty}(\mu  +  S_{2T}^{-1} \opb)^{-1} v_m =0$ for all $m\in \mathbb{N}$
and for $\vb$ we have the following estimate
\begin{equation}
\label{eq:bn}
\lVert S_T^{-1} (\mu  +  S_{2T}^{-1} \opb)^{-1} \vb \rVert  \leq \lVert  S_T (\mu S_{2T} +   \opb)^{-1} \rVert  \lVert\vb -\vb_m \rVert  + \lVert S_T^{-1} (\mu  +  S_{2T}^{-1} \opb)^{-1}\vb_m \rVert  .
\end{equation}
By differentiating one can show that the mapping $[0,\infty) \ni \mu \mapsto \lVert (\mu S_{2T} +   \opb)^{-1}x \rVert^2 $ is a decreasing function for all $x\in\cH$, so in particular 
$[0,\infty) \ni \mu \mapsto \lVert S_T (\mu S_{2T} +   \opb)^{-1} \rVert$ is a bounded function. 
Thus we can pass to the limit in \eqref{eq:bn}, from which we obtain \eqref{eq:b-limit}.

Finally, (c) follows from $ \Phi (0) =  \lVert \yhoms - \opb^{-1}S_T\vb  \rVert =  \lVert y^* - \ytilde  \rVert$.
\end{proof}

Now we are ready to provide  proof of Theorem  \ref{thm:solution_final_state}.

\begin{proof}[Proof of Theorem~\ref{thm:solution_final_state}]
Note that the  case $\epsilon \ge \lVert \ytilde - y^* \rVert$ is covered trivially. Indeed,  by choosing $\muhat = 0$ in  \eqref{sol_form}
we obtain $\uhat = \Psi^{-1}\psi = \utilde$. By assumption on $\epsilon$, the unconstrained minimizer $\utilde$ is admissible, and clearly optimal.

For the rest of the proof we consider the case  $0 < \epsilon < \lVert \ytilde - y^* \rVert$. We fix $T,\epsilon > 0$ and $y^* \in \cH$.

Based on Lemma \ref{lem:yopt}, our problem \eqref{eq:problem} (or its equivalent form \eqref{eq:problemh}) can be restated as 
\begin{equation*} 
 \min_{u \in \cH} \left\{ J(u) \colon \lVert S_T u  - \yhoms \rVert =\epsilon \right\}.
\end{equation*}
whose  associate Lagrange functional reads as
\[
L(u, \mu)= J(u) + \frac{\mu}{2} \left(\lVert S_T u - \yhoms \rVert^2 - \epsilon^2\right).
\]
Its (global) minimizer corresponds to the unique solution of our problem. 
As $J$ is a differentiable function, so it is the Lagrangian $L$. Thus it achieves the minimum value in the point $(\uhat, \muhat)$ satisfying
\[
\nabla_{u, \mu} L (\uhat, \muhat)= 0.
\]
By exploring the relation $\nabla J(u)= \Psi u - \psi$ we get
\[
\nabla_{u} L (\uhat, \muhat) = \Psi \uhat - \psi + \muhat (S_{2T } \uhat  - S_T \yhoms) =0,
\]
which directly leads to the formula \eqref{sol_form}. In order to determine the optimal value of the Lagrange multiplier, we use Lemma  \ref{lem:yopt} providing
$
\lVert S_T \uhat  - \yhoms \rVert =\eps
$.
Plugging the expression for the optimal control $\uhat$ we obtain
\[
\Phi(\muhat)=\lVert \yhoms -  (\mu S_{2T} +  \opb)^{-1} (\muhat S_{2T} \yhoms + S_T\vb) \rVert =\eps.
\]
Lemma \ref{lem:Phi} and the assumption  $\epsilon < \lVert \ytilde - y^* \rVert$ ensure the existence of the unique value $\muhat$ satisfying the last relation, which completes the proof. 
\end{proof}
For the rest of this section  we provide  a geometric interpretation of the final optimal state. 
For $x \in \cH$ and $W\subset \cH$ closed and convex, we denote by $\Pi_W (x)$ the unique projection of $x$ onto the set $W$. 

For $c \in \mathbb{R}$ we denote by $\Gamma_c (g)$ the $c$-sublevel set of a function $g \colon \dom (g)\subset \cH \to \RR$  defined by 
\[ 
\Gamma_c (g) = \{y \in \dom(g) \colon g (y) \leq c\}. 
\]

Note that the sublevel set of a convex function is convex, see, e.g., Remark~2.10 in \cite{Peypouquet-15}.  In particular, we have the following result. 
\begin{lemma}\label{lemma:projection}
 Let $g : \cH \to \RR$ be a differentiable convex function, $c > \inf g$, and $y_1 \in \cH \setminus \Gamma_c (g)$. Denote by $y_0 \in \Gamma_c (g)$ the unique projection of $y_1$ onto $\Gamma_c (g)$. Then there is $\gamma > 0$ such that
 \[
  y_1 - y_0 = \gamma \nabla g(y_0) .
 \]
\end{lemma}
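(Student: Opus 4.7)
My plan is to interpret the projection $y_0$ as the unique solution of the constrained convex program
\[
 \min_{y \in \Gamma_c (g)} \tfrac{1}{2} \lVert y - y_1 \rVert^2 ,
\]
and to read off the identity $y_1 - y_0 = \gamma \nabla g(y_0)$ from first-order optimality. The first step is to check that $y_0$ lies on the boundary of $\Gamma_c(g)$, i.e.\ $g(y_0) = c$. If instead $g(y_0) < c$, continuity of $g$ would give a ball $B_\rho(y_0) \subset \Gamma_c(g)$; moving $y_0$ a little toward $y_1$ would then produce a feasible point strictly closer to $y_1$, contradicting minimality. I would also record that $\nabla g(y_0) \neq 0$: since $g$ is convex and differentiable, $\nabla g(y_0) = 0$ would force $y_0$ to be a global minimizer, hence $g(y_0) = \inf g < c$, contradicting $g(y_0) = c$.

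Next I would derive the standard projection variational inequality: for every $y \in \Gamma_c(g)$, the segment $(1-t) y_0 + t y$ lies in $\Gamma_c(g)$ by convexity of the sublevel set, so $t \mapsto \lVert (1-t) y_0 + t y - y_1 \rVert^2$ has a minimum at $t = 0$ on $[0,1]$. Differentiating at $0^+$ yields
\[
 \langle y_1 - y_0 ,\, y - y_0 \rangle \le 0 \qquad \text{for all } y \in \Gamma_c(g) .
\]

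The crux is then to transform this geometric inequality into a statement involving the linearization $\nabla g(y_0)$. Given any $v \in \cH$ with $\langle \nabla g(y_0), v \rangle < 0$, a Taylor expansion gives $g(y_0 + s v) = c + s \langle \nabla g(y_0), v \rangle + o(s) < c$ for all sufficiently small $s > 0$, hence $y_0 + s v \in \Gamma_c(g)$. Plugging $y = y_0 + s v$ into the variational inequality and dividing by $s$ yields $\langle y_1 - y_0, v \rangle \le 0$, and by continuity this extends to every $v$ with $\langle \nabla g(y_0), v \rangle \le 0$. Thus $y_1 - y_0$ lies in the polar of the half-space $\{ v \colon \langle \nabla g(y_0) , v \rangle \le 0 \}$, which, since $\nabla g(y_0) \neq 0$, is exactly the ray $\{ \gamma \nabla g(y_0) \colon \gamma \ge 0 \}$. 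Finally $\gamma > 0$ follows because $y_1 \notin \Gamma_c(g) \ni y_0$, so $y_1 - y_0 \neq 0$.

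The main delicacy I expect is the last step, that is, passing from the geometric variational inequality over $\Gamma_c(g)$ to the linearized condition encoded by $\nabla g(y_0)$. This is exactly where the hypothesis $c > \inf g$ plays its role (a form of Slater's constraint qualification): it forces $\nabla g(y_0) \neq 0$ and ensures that the tangent cone to $\Gamma_c(g)$ at $y_0$ coincides with the linearized cone $\{ v \colon \langle \nabla g(y_0), v \rangle \le 0 \}$, so that its polar collapses to a single ray. Without this, the normal direction at $y_0$ would only be known to lie in a larger cone, and the one-parameter identity claimed by the lemma could fail.
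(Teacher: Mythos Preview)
Your argument is correct and complete, but it follows a genuinely different route from the paper. The paper's proof is a two-line appeal to Lagrange multipliers: it writes down the Lagrangian $L(y,\lambda)=\tfrac12\lVert y-y_1\rVert^2+\lambda(g(y)-c)$, asserts that the minimizer satisfies $\nabla_{(y,\lambda)}L(y_0,\gamma)=0$, and reads off $y_1-y_0=\gamma\nabla g(y_0)$; the sign of $\gamma$ is then attributed to $g(y_1)>g(y_0)$. Your proof instead works from first principles: you establish $g(y_0)=c$ and $\nabla g(y_0)\neq 0$ directly, derive the projection variational inequality, and identify the normal cone to $\Gamma_c(g)$ at $y_0$ with the ray through $\nabla g(y_0)$ via a linearization argument.

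What each approach buys: the paper's version is shorter but tacitly assumes the KKT conditions hold, which in infinite dimensions requires a constraint qualification; your proof is longer but self-contained, and in fact makes explicit precisely the ingredient (Slater's condition, equivalently $\nabla g(y_0)\neq 0$ via $c>\inf g$) that the Lagrangian argument needs to be rigorous. Your treatment of the sign of $\gamma$ (via $y_1\neq y_0$) is also cleaner than the paper's one-line justification.
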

\begin{proof}
First note that the projection  $y_0$ is the solution of the constrained minimisation problem
\[
\min_{y\in \Gamma_c (g)}  \Big\{ \frac 12 \lVert y-y_1 \rVert^2: \; y \in \partial\Gamma_c (g)\Big\},
\]
whose Lagrangian is given by the relation 
\[
L(y,\lambda)=\frac 12 \lVert y-y_1 \rVert^2 + \lambda (g(y)-c).
\]
 Since $g$ is a differentiable, real-valued function,  the associated Lagrangian is differentiable as well.
Therefore its unique minimizer $(y_0, \gamma)$ satisfies $\nabla L(y_0, \gamma) =0$, which is exactly the formula appearing in the statement of the lemma.

The sign of $\gamma$ results from the fact that $g(y_1) > g(y_0)$ (as $y_1 \in \cH \setminus \Gamma_c (g))$.
\end{proof}
We introduce the functional $h: \ran(S_T) \to \mathbb{R}$, defined by $h(y)=J(S_T^{-1} y)$.
For each $c \in \RR$ we define the corersponding sublevel set
\begin{align*}
 W_{c} &:=\{S_T u \colon u \in\cH \ \text{with}\ J (u) \leq c\} = \Gamma_c (h) .
\end{align*}
With the notation $c_{\mathrm{min}} = J (u^{\mathrm{min}})$ we have that $W_{c} = \emptyset$ if and only if $c < c_{\mathrm{min}}$. 
In order to show that $W_c$ are closed we will use the fact that the weak and strong closure of a convex set agrees. If $c < c_{\mathrm{min}}$ the set $W_c$ is empty and thus closed. Assume now $c \geq c_{\mathrm{min}}$ and let $(y_n)_{n \in \NN}$ be a (strongly) convergent sequence in $W_c \subset \cH$. We denote by $y = \lim_{n\to \infty} y_n$ its limit. If we show that $y\in W_c$, i.e.\ $h (y) \leq c$, we are done. For $n \in \NN$ let $x_n \in \cH$ be such that $y_n = S_T x_n$. As $J(x_n)\le c$ for all $n \in \NN$, it follows that $(x_n)_{n \in \NN}$ is a bounded sequence. Hence there exists a weakly convergent subsequence, still denoted by $(x_n)_{n \in \NN}$. We denote by $x = \wlim_{n\to \infty}x_n$ its weak limit.
For all $z\in \cH$ we have $ \langle S_T x_n -y, z \rangle = \langle x_n, S_T z \rangle - \langle y, z \rangle \to \langle x, S_T z \rangle - \langle y, z \rangle  $. Hence 
 $S_T x = y$. 
From the continuity of the function $J$ and $h (y_n) \leq c$ for each $n \in \NN$ we conclude
\[
 c \geq \wlim_{n \to \infty} h (y_n) 
 = 
  J ( \wlim_{n \to \infty} x_n) 
 = 
  J (x)
 = 
  h (y) .
\]
If we formally differentiate the function $h$, for the gradient of $h$ we obtain
 \begin{equation}
 \begin{aligned}
\label{eq:h-gradient}
\nabla h(y)&=  S_{2T}^{-1} \left(  \alpha y + \int_0^T \beta (t) S_{2t}y \,\mathrm{d}t \right)  - S_{T}^{-1} \int_0^T \beta(t) S_{t} \whom(t) \,\mathrm{d}t \\
&= S_{2T}^{-1} \Psi y - S_{T}^{-1} \psi.
\end{aligned}
\end{equation}
Below we comment how to tackle the case if $h$ is not differentiable in $\cH$.
Denoting by $\chat=J(\uhat) $, from Lemma \ref{lem:yopt} it follows that the optimal final state belongs to $W_{\chat}\cup \partial B(y^*, \eps)$. Specially, it equals the projection $ \Pi_{W_\chat} (y^*)$ of the target state to the sublevel set $W_{\chat}$ (Figure \ref{fig:illustrationWc}). 

By putting $c = \chat$ and $y_1 = y^*$ into   Lemma \ref{lemma:projection}, we obtain that there exists $\gammahat > 0$  such that 
 \begin{equation*} 
  y^* - \yhat = \gammahat \nabla \funomega (\yhat) .
 \end{equation*}
By acting with $S_T$ on the last equality,  taking into account that $\yhat = S_T \uhat+  \int_0^t S_{\tau}f(t-\tau)\drm \tau$ and using \eqref{eq:h-gradient} we obtain 
\begin{equation*} 
S_T \yhoms -  S_{2T} \uhat = \gammahat (\Psi \uhat - \psi) ,
\end{equation*}
which lead us again to the solution formula \eqref{sol_form}  with $\muhat= 1/\gammahat$. 

One can bypass the problem of non-differentiability of $h$ by taking a sequence of approximate functionals $h_n(y) = J(\mathrm{e}^{-TA_n}y)$ for $n$ large enough, where $A_n$ are the Yosida approximations of the operator $A$. Then $h_n:\cH \to \mathbb{R}$ are differentiable functions which converge to $h$. Instead of \eqref{eq:problemh} for all  $n$ large enough we study the problem
\begin{equation}
  \label{eq:n-constr_problem}
  \min_{y\in \cH} \left\{ \funomega_n(y)\colon \lVert y-y^* \rVert  \leq \epsilon \right\}. 
\end{equation}
Then one can prove the corresponding version of Lemma \ref{lem:yopt} and use Lemma \ref{lemma:projection} to show that for all $n$ large enough we have
\begin{equation*} 
\mathrm{e}^{TA_n} y^* -  \mathrm{e}^{2TA_n} \uhat_n = \gammahat_n (\Psi \uhat_n - \psi) ,
\end{equation*}
where $\uhat_n$ is the unique solution of \eqref{eq:n-constr_problem}. Finally, by proving $\lim_{n\to \infty}\uhat_n = \uhat$ and $\lim_{n\to \infty}\gammahat_n = 1/\muhat$, one recovers \eqref{sol_form}. We skip the details.
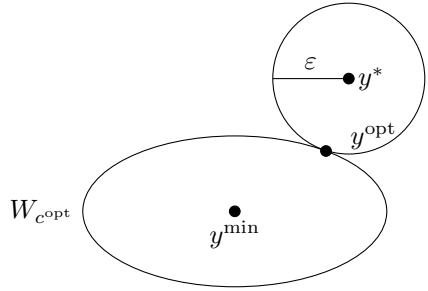
\begin{figure}[ht] \centering
\begin{tikzpicture}
 \draw (0,0) ellipse (2cm and 1cm);
 \draw (-2,0) node[left] {$W_{\chat }$};
 \draw (1.5,1.76) circle (1cm);
 \filldraw (1.5,1.76) circle (2pt) node[right] {$y^*$};
 \filldraw (1.2,0.8) circle (2pt);
 \draw (1.4,1) node[right] {$\yhat$};
 \draw (1.5,1.76)--(0.5,1.76);
 \draw (1,1.76) node[above] {$\epsilon$};
 
 \filldraw (0,0) circle (2pt) node[below] {$\ytilde$};
 \end{tikzpicture}
\caption{Illustration of the optimal final state. It equals the projection of the target state to the sublevel set $W_{\chat}$.}
\label{fig:illustrationWc}
\end{figure}
\section{Sensitivity analysis} 
\label{sec:sensitivity_analysis}
%
%
In this section we show that the solution of \eqref{eq:problem} is stable in the sense that if the parameters $\alpha$, $f$, $\beta$, $w$, $y^*$ and $A$ are perturbed by a small perturbation, then the solution of the perturbed problem is as well a small perturbation of the solution of the unperturbed problem. 
Let $0 < \nu <1 $ and 
\begin{enumerate}[(i)]
 \item $\delta\alpha < \nu$ such that $\alpha + \delta\alpha > 0$,
 \item $\delta f \in L^2 ((0,\infty) ; \cH)$ such that $\lVert \delta f \rVert_{L^2 ((0,\infty) ; \cH)} < \nu$ ,
 \item $\delta \beta \in L^\infty ((0,T); \RR)$ such that $\lVert \delta \beta \rVert_{L^{\infty}((0,T);\mathbb{R})} < \nu $ and $\beta + \delta \beta \in L^\infty ((0,T) ; [0,\infty))$,
  \item $\delta w \in L^2 ((0,T);\cH)$ such that $\lVert \delta w\rVert_{L^2((0,T);\cH )} < \nu$,
  \item $\delta y^* \in \cH$ such that $\lVert \delta y^* \rVert < \nu$.
  \item For the perturbation $\delta A$ of the operator $A$ we assume that $\delta A$ is a symmetric linear operator in $\cH$, $A + \delta A$ is an upper bounded self-adjoint operator in $\cH$, and there exists $\zeta > \max \{\max \sigma (A) , \max \sigma (A + \delta A)\}$ and $R > 0$ such that for all $s \in \RR$ we have 
\begin{equation}
\label{eq:deltaAcondition}
\lVert (\zeta + \mathrm{i} s- A - \delta A)^{-1} - (\zeta + \mathrm{i} s  -A)^{-1} \rVert <  \nu  \begin{cases} 1, & |s|\le R, \\ s^{-2}, & |s| > R. \end{cases}
\end{equation}
\end{enumerate}
We denote by $(S_t^\delta)_{t \geq 0}$ the semigroup generated by $A + \delta A$, and introduce the short hand notation $\alpha_\delta = \alpha + \delta \alpha$, $f_\delta = f + \delta f$, $\beta_\delta = \beta + \delta \beta$, $w_\delta = w + \delta w$ and $y^*_\delta = y^* + \delta y^*$. 
We will use the estimate \eqref{eq:deltaAcondition} to obtain an upper bound for the perturbation of the semigroup, e.g.\ $\lVert S_t^\delta - S_t \rVert $.
Now we introduce the perturbed problem
\begin{equation} \label{eq:problem-perturbed}
 \min_{u \in \cH} \left\{ J_\delta(u) \colon \lVert S_T^\delta u + \int_0^T S_{\tau}^\delta f_\delta (T-\tau)\drm \tau - y^*_\delta\rVert \leq \epsilon \right\}
\end{equation}
where
\[
 J_\delta (u) =  \frac{\alpha_\delta}{2} \lVert u \rVert^2 + \frac{1}{2} \int_0^T \beta_\delta (t) \left\lVert S_t^\delta u + \int_0^t S_{\tau}^\delta f_\delta(t-\tau)\drm \tau - w_\delta(t) \right\rVert^2 \drm t .
\]
Let $ \yhoms_{\delta} = y_{\delta}^* - \int_0^T S^{\delta}_{\tau}f_{\delta}(T-\tau)\drm \tau$, $\whom_{\delta} = w_{\delta} - \int_0^{\boldsymbol{\cdot}} S^{\delta}_{\tau}f_{\delta}(\boldsymbol{\cdot} - \tau)\drm \tau$, $\delta \yhoms = \yhoms_{\delta} - \yhoms$ and $\delta \whom = \whom_{\delta} - \whom$.
We denote the unique solution of the perturbed problem \eqref{eq:problem-perturbed} by $\uhat_\delta$, and recall that $\uhat$ is the unique solution of the unperturbed problem~\eqref{eq:problem}.
\begin{theorem} \label{thm:sensitivity}
 Under the above assumptions we have 
 \[
  \lVert \uhat_\delta - \uhat \rVert < C \nu
 \]
 for $\nu$ small enough, where $C$ is a constant that does not depend on $\nu$.
\end{theorem}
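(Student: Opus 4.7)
The strategy is to leverage the closed-form solution from Theorem~\ref{thm:solution_final_state}, writing
\[
 \uhat_\delta - \uhat = (\muhat_\delta S^\delta_{2T} + \opb_\delta)^{-1}(\muhat_\delta S^\delta_T \yhoms_\delta + \vb_\delta) - (\muhat S_{2T} + \opb)^{-1}(\muhat S_T \yhoms + \vb),
\]
and to control each ingredient separately: the semigroup perturbation $S^\delta_t - S_t$, the induced perturbation of $\opb$, $\vb$, $\yhoms$, and finally the perturbation of the scalar Lagrange multiplier $\muhat_\delta - \muhat$. Add-subtract and the second resolvent identity will then propagate all bounds through the inverse $(\muhat S_{2T} + \opb)^{-1}$, which has norm bounded by $\alpha^{-1}$ uniformly.

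The first task is to turn assumption (vi) into a uniform semigroup bound. Fix a vertical contour $\Gamma = \{\zeta + \mathrm{i} s \colon s\in\RR\}$ and use the Dunford--Taylor representation
\[
 S^\delta_t - S_t = \frac{\euler^{\zeta t}}{2\pi} \int_{-\infty}^{\infty} \euler^{\mathrm{i} s t}\bigl[(\zeta + \mathrm{i} s - A - \delta A)^{-1} - (\zeta + \mathrm{i} s - A)^{-1}\bigr]\drm s .
\]
Splitting the integral at $|s|=R$ and inserting \eqref{eq:deltaAcondition} gives $\lVert S^\delta_t - S_t\rVert \leq C_0\, \nu\,\euler^{\zeta t}$, so that over $[0,2T]$ one obtains $\lVert S^\delta_t - S_t\rVert \leq C_1 \nu$. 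Combined with $\lVert \delta f\rVert_{L^2}, \lVert \delta w\rVert_{L^2}, \lVert \delta y^*\rVert, \lVert \delta\beta\rVert_\infty, |\delta\alpha| < \nu$, a direct computation using the explicit formulas for $\opb$, $\vb$, $\yhoms$, $\whom$ yields
\[
 \lVert \opb_\delta - \opb\rVert, \ \lVert \vb_\delta - \vb\rVert,\ \lVert \yhoms_\delta - \yhoms\rVert, \ \lVert \whom_\delta - \whom\rVert_{L^2} \leq C_2\,\nu,
\]
so that also $|J_\delta(\utilde_\delta) - J(\utilde)|$ and $\lVert \utilde_\delta - \utilde\rVert$ are $O(\nu)$.

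The most delicate step is the continuity of the multiplier $\muhat$. In the inactive case $\epsilon > \lVert \ytilde - y^*\rVert$, the inequality persists for the perturbed data once $\nu$ is small, so $\muhat = \muhat_\delta = 0$ and only the unconstrained sensitivity is needed. In the genuinely constrained case $\epsilon < \lVert \ytilde - y^*\rVert$, the plan is to apply the implicit function theorem to $F(\mu,\theta) = \Phi_\theta(\mu) - \epsilon$ at $(\muhat, 0)$, where $\theta$ collects all perturbation parameters. From Lemma~\ref{lem:Phi} and the derivative computation
\[
 (\Phi^2)'(\muhat) = -2\bigl\lVert S_T(\muhat S_{2T} + \opb)^{-3/2}(\opb\yhoms - S_T \vb)\bigr\rVert^2,
\]
which is strictly negative (as $\opb\yhoms \neq S_T\vb$ whenever $\epsilon < \lVert\ytilde - y^*\rVert$), we get $\Phi'(\muhat) < 0$, so $F$ has non-vanishing $\mu$-derivative at $\muhat$. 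Joint continuity of $(\mu,\theta)\mapsto\Phi_\theta(\mu)$ and of its derivative follows from the semigroup and coefficient bounds above. Hence $|\muhat_\delta - \muhat| \leq C_3\,\nu$. The borderline case $\epsilon = \lVert\ytilde - y^*\rVert$ is handled by observing that $\muhat = 0$ and, using Lemma~\ref{lem:Phi}(b), $\muhat_\delta$ stays bounded as $\nu\to 0$.

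Finally, plug these estimates into the closed-form formula. Writing
\[
 \uhat_\delta - \uhat = (\muhat_\delta S^\delta_{2T} + \opb_\delta)^{-1}\bigl[(\muhat_\delta S^\delta_T \yhoms_\delta + \vb_\delta) - (\muhat S_T \yhoms + \vb)\bigr] + \bigl[(\muhat_\delta S^\delta_{2T} + \opb_\delta)^{-1} - (\muhat S_{2T} + \opb)^{-1}\bigr](\muhat S_T \yhoms + \vb),
\]
the second bracket is expanded via the second resolvent identity, producing factors $\opb_\delta - \opb$, $S^\delta_{2T} - S_{2T}$ and $\muhat_\delta - \muhat$, each $O(\nu)$, while the uniform bound $\lVert(\muhat_\delta S^\delta_{2T} + \opb_\delta)^{-1}\rVert \leq (\alpha - \nu)^{-1}$ keeps the prefactor controlled. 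Summing all contributions produces the required estimate $\lVert \uhat_\delta - \uhat\rVert \leq C\,\nu$. The main obstacle is genuinely the middle step: promoting the abstract resolvent assumption \eqref{eq:deltaAcondition} to a uniform semigroup bound and, in parallel, controlling $\muhat$ through the implicitly defined equation $\Phi(\muhat) = \epsilon$; everything else is bookkeeping through the triangle inequality.
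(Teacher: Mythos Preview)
Your proposal follows essentially the same route as the paper: both arguments obtain a uniform bound $\lVert S^\delta_t - S_t\rVert \le C\nu\,\euler^{\zeta t}$ from assumption~(vi) via the Dunford--Taylor contour integral along the vertical line $\{\zeta + \mathrm{i}s\}$, then propagate this through the explicit formulas for $\opb$, $\vb$, $\yhoms$, $\whom$ to get $O(\nu)$ bounds on all data perturbations, and finally feed everything into the closed-form solution~\eqref{sol_form} via add--subtract and the resolvent identity, using $\lVert(\mu S_{2T}+\opb)^{-1}\rVert\le\alpha^{-1}$ uniformly.

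The one genuine difference is in the treatment of the Lagrange multiplier. You invoke the implicit function theorem on $F(\mu,\theta)=\Phi_\theta(\mu)-\epsilon$, using the strict negativity of $\Phi'(\muhat)$ established in Lemma~\ref{lem:Phi}(a); this cleanly yields $|\muhat_\delta-\muhat|\le C_3\nu$. The paper instead shows $|\Phi(\muhat_\delta)-\Phi(\muhat)|<C\nu$ and then appeals to continuity of $\Phi^{-1}$ (from strict monotonicity) to conclude $|\delta\muhat|<C\nu$. Strictly speaking, continuity of $\Phi^{-1}$ alone does not produce a \emph{linear} bound; one needs local Lipschitz continuity of $\Phi^{-1}$ at $\epsilon$, which is exactly what $\Phi'(\muhat)\neq 0$ provides --- so your IFT argument is the more transparent justification of the same fact. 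Conversely, your handling of the borderline case $\epsilon=\lVert\ytilde-y^*\rVert$ is incomplete: boundedness of $\muhat_\delta$ does not by itself yield $\lVert\uhat_\delta-\uhat\rVert=O(\nu)$, and the paper simply does not address this case either.
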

\begin{remark}
 Let us discuss certain situations where assumption \eqref{eq:deltaAcondition} is satisfied. 
 \begin{enumerate}[(i)]
  \item Let $\lVert \delta A \rVert < \nu$, $\zeta = \max \{\max \sigma (A) , \max \sigma (A + \delta A)\} + 1$ and $R = 1$. 

  Then, by the second resolvent identity and $\lVert (z-T)^{-1} \rVert = 1 / \operatorname{dist} (z,\sigma (T))$ for self-adjoint $T$ and $z \in \rho (T)$ we conclude for all $s \in \RR$ 
 \begin{align*}
  \lVert (\zeta + \mathrm{i} s - A_\delta)^{-1} - (\zeta + \mathrm{i} s - A)^{-1} \rVert
  &=
  \lVert (\zeta + \mathrm{i} s - A_\delta)^{-1} \delta A (\zeta + \mathrm{i} s - A)^{-1} \rVert \\
  &< \frac{\nu}{\operatorname{dist}(\zeta + \mathrm{i} s , \sigma (A_\delta)) \cdot \operatorname{dist}(\zeta + \mathrm{i} s , \sigma (A))}  \\
   & \leq \frac{\nu}{1+s^2} .
 \end{align*}
  Hence, \eqref{eq:deltaAcondition} is satisfied if $\delta A$ is a bounded operator with norm smaller than $\nu$.
  \item %
  Let $\delta A$ be relatively bounded with respect to $A$ with a relative bound smaller than $\nu$, i.e.\ $ \lVert \delta A x \rVert  \le a \lVert x \rVert + b \lVert Ax \rVert  $ for all $x\in \dom(A)$ with $0\leq b < \nu$ and a nonnegative constant $a$. Let $R=1$.
  Then  $A + \delta A$ is an upper-bounded self-adjoint operator, see e.g.\ \cite[Theorem V.4.3, Theorem V.4.11]{Kato1995}. 
  Let $\zeta > \max \sigma (A) =: \kappa$ be arbitrary.  Then we have $\lVert A (\zeta - A)^{-1} \rVert = \int_{-\infty}^{\kappa}   \frac{\lvert \lambda \rvert }{\zeta - \lambda} \,\mathrm{d}\lVert E(\lambda) \rVert  $,
  hence for all $\delta > 0$ and all $\zeta > \frac{2 + \delta}{1 + \delta}\kappa$ we have $\lVert A (\zeta - A)^{-1} \rVert < 1 + \delta$. 
  Let  $\delta = (\nu -b)/(a+b) $ and let  $\zeta > \frac{2 + \delta}{1 + \delta}\kappa$ be such that  
  $\lVert (\zeta - A)^{-1} \rVert < \delta $.  Let $x\in\dom (A)$ be arbitrary and let $y = (\zeta -A)x$.
  Then 
  \[
  \lVert \delta A x \rVert \le a \lVert (\zeta -A)^{-1}y \rVert + b \lVert A (\zeta - A)^{-1}y \rVert < \nu \lVert (\zeta - A)x \rVert .    
  \]
  Let $\chi\ge \zeta$ be arbitrary. Then from 
  $\langle x + (\chi - \zeta) (\zeta -A)^{-1}x, x \rangle \ge 1$ for all $x\in \cH$ such that $\lVert x \rVert = 1 $ it follows $\lVert (I + (\chi - \zeta)(\zeta -A)^{-1})^{-1} \rVert \le 1$ and hence 
  \[
  \lVert \delta A (\chi -A)^{-1}  \rVert  = \lVert \delta A (\zeta -A)^{-1} (I + (\chi - \zeta)(\zeta -A)^{-1})^{-1} \rVert < \nu.
  \]
   This implies  $\chi \in \sigma (A  +\delta A)$, e.g.\ $\max \sigma (A + \delta A) < \zeta$.
  By the second resolvent identity we have for all $s\in \mathbb{R}$
  \begin{align*}
  \lVert (\zeta + \mathrm{i} s - A_\delta)^{-1} - (\zeta + \mathrm{i} s - A)^{-1} \rVert & =  \lVert (\zeta + \mathrm{i} s - A_\delta)^{-1} \delta A (\zeta + \mathrm{i} s - A)^{-1} \rVert \\
  &\le \lVert (\zeta + \mathrm{i} s - A_\delta)^{-1} \rVert  \lVert \delta A (\zeta + \mathrm{i} s - A)^{-1} \rVert \\
  & = \operatorname{dist}(\zeta + \mathrm{i} s , \sigma (A_\delta))^{-1}\lVert \delta A (\zeta + \mathrm{i} s - A)^{-1} \rVert.
  \end{align*}
  For all $s\ne 0$ we have
  \begin{align*}
  \lVert  \delta A (\zeta + \mathrm{i} s - A)^{-1}  \rVert &= \lVert \delta A \left( (I + \mathrm{i} s (\zeta - A)^{-1}) (\zeta - A) \right)^{-1}\rVert \\
  & = \lVert \delta A (\zeta - A)^{-1} \left( I + \mathrm{i}s (\zeta - A)^{-1} \right)^{-1}  \rVert \\
  &< \frac{\nu}{s} \left\lVert \left( \frac{1}{\mathrm{i}s} I + (\zeta -A)^{-1} \right)^{-1}  \right\rVert  \\
  & = \frac{\nu}{s} \operatorname{dist}\left( -\frac{1}{\mathrm{i} s}, \left( \zeta - \sigma(A) \right)^{-1}  \right)^{-1} \le \frac{\nu}{\sqrt{s^2 +1}}   
  \end{align*}
  but note that the final estimate holds also for $s=0$.
  Hence we finally obtain for all $s\in \mathbb{R}$ and $\xi = \zeta +1$
  \[
  \lVert (\xi + \mathrm{i} s - A_\delta)^{-1} - (\xi + \mathrm{i} s - A)^{-1} \rVert <  \frac{\nu}{1 + s^2},
  \]  
  and hence the perturbation $\delta A$ satisfies the assumption (vi).
 \end{enumerate} 
\end{remark} \medskip


\begin{proof}[Proof of Theorem~\ref{thm:sensitivity}]
  We first estimate the perturbation bound for $\opb$.
  We  define $\Xi := \left\{ \zeta + i s \colon s \in \mathbb{R} \right\}$, where $\zeta$ is the value from the assumption (vi). Then $\Xi$ is in the resolvent sets of both $A$ and $A + \delta A$. Using the spectral calculus for generators of $C_0$-semigroups (see, for example \cite{EngelN1999}), we obtain
  \begin{align*}
    \opb &= \alpha I + \frac{1}{2\pi i} \int_0^T \beta (t) \int_\Xi \euler^{2t \lambda}(\lambda - A)^{-1}\drm \lambda \, \drm t,\\
    \opb + \delta \opb &= (\alpha +  \delta \alpha) I + \frac{1}{2\pi i} \int_0^T (\beta (t) + \delta \beta (t)) \int_\Xi \euler^{2t \lambda}(\lambda - A - \delta A )^{-1}\drm \lambda \, \drm t.
  \end{align*}
     Hence, using Fubini theorem and the resolvent formula, we obtain 
  \begin{multline}  \label{eq:3_terms}
  \delta \opb = \delta \alpha I + \frac{1}{2\pi i}  \int_\Xi  (\lambda - A - \delta A )^{-1}\delta A (\lambda - A)^{-1} \int_0^T \beta(t) \euler^{2t \lambda}  \drm t \, \drm  \lambda  \\ +  \frac{1}{2\pi i} \int_0^T \delta \beta (t) \int_\Xi \euler^{2 t \lambda} (\lambda -A - \delta A)^{-1} \drm \lambda \, \drm t.
  \end{multline}
  From $\lvert \int_0^T \beta(t) \euler^{2t \lambda}  \drm t\rvert \le \lVert \beta \rVert \frac{1}{2  \zeta  } (  \euler^{2 T \zeta} -1 ) $ for $\lambda \in \Xi$, the norm of the second term of $\delta \opb$ can be estimated from above by
  \begin{align*}
  &\frac{  \euler^{2 T \xi} -1 }{4  \xi  \pi}  \lVert \beta \rVert  \int_\Xi  \lVert(\lambda - A - \delta A )^{-1} - (\lambda - A)^{-1} \rVert \drm \lambda  \\ 
  &=
    \frac{  \euler^{2 T \xi} -1 }{4  \xi  \pi} \lVert \beta \rVert  \left( \int_{\xi - i R}^{\xi + iR}  + \int_{\Xi \setminus [\xi-iR, \xi + iR]}\right)   \lVert(\lambda - A - \delta A )^{-1} - (\lambda - A)^{-1} \rVert \drm \lambda 
    \\
   & < \frac{\nu  ( \euler^{2 T \xi}  - 1)}{2  \xi  \pi} \lVert \beta \rVert (R + R^{-1})  ,
  \end{align*}
  where in the last inequality we have used \eqref{eq:deltaAcondition}.
From
\begin{align*}
   & \left\lVert  \int_0^T \delta \beta (t) \int_\Xi \euler^{2 t \lambda} (\lambda -A - \delta A)^{-1} \drm \lambda \, \drm t  \right\rVert  \le 
   \int_0^T \lvert \delta \beta (t)  \rvert \left\lVert \int_\Xi \euler^{2 t \lambda} (\lambda -A - \delta A)^{-1} \drm \lambda \right\rVert \drm t \\
   &< \nu \int_0^T \left\lVert S_{2t}^{\delta} \right\rVert \drm t \le 
   \nu \int_0^T  \int_{- \infty}^\zeta \lvert \mathrm{e}^{2 t \lambda } \rvert \;  \mathrm{d}\lVert E_{A + \delta A} (\lambda) \rVert \drm t \le 
   \nu \frac{\euler^{2 T \xi}  - 1}{2 \zeta},
\end{align*}
we obtain that the third term in \eqref{eq:3_terms} has an upper bound $\nu ( \euler^{2 T \xi}  - 1) / (4 \zeta \pi)$. Hence we obtain 
\begin{equation*} 
  \lVert \delta \opb \rVert <  \nu \left( 1  + \frac{\euler^{2 T \xi}  - 1}{4 \zeta \pi} \left( 2 \lVert \beta  \rVert (R + R^{-1}) +1  \right)   \right).
  \end{equation*}
  To obtain an upper bound for $\lVert \delta\vb \rVert  $, we use the same steps as for $\delta \opb$, but pulling out the $L^2$-functions using H\"older's inequality.  
 For $t \ge 0$ we define the operator function
 \[
 D(t) = \frac{1}{2\pi i}\int_\Xi  \euler^{t \lambda} \left( (\lambda - A - \delta A)^{-1} - (\lambda - A)^{-1} \right)\drm \lambda.
 \]
 Note that $D(t) = S_t^{\delta} - S_t$, hence $D(t)$ is actually the perturbation of $S_t$. We calculate
 \[
 \lVert D(t) \rVert < \nu \pi^{-1} \mathrm{e}^{\zeta t} (R + R^{-1})  \text{ for all } t \ge 0.
 \]
  We first estimate 
  \[ \delta \whom = \delta w - \int_0^{\boldsymbol{\cdot}} D(\tau) f(\tau) \drm \lambda \, \drm \tau  - \frac{1}{2\pi i}\int_0^{\boldsymbol{\cdot}} \int_\Xi \euler^{\tau \lambda} (\lambda - A - \delta A)^{-1} \delta f(\tau) \drm \lambda \, \drm \tau .
  \]
 By using H\"older's inequality we estimate
  \begin{align*}
    \lVert \delta \whom(t) \rVert &\le \lVert \delta w(t)\rVert +   \left\lVert \int_0^t  D(\tau)  f(\tau) \drm \tau  \right\rVert \\
    &\qquad + \frac{1}{2\pi}\left\lVert \int_0^t \int_\Xi \euler^{\tau \lambda} (\lambda - A - \delta A)^{-1} \delta f(\tau) \drm \lambda \, \drm \tau  \right\rVert \\
    &< \lVert \delta w(t) \rVert +  \frac{ \nu}{\pi} (R + R^{-1}) \lVert f \rVert \sqrt{\frac{ \mathrm{e}^{2 t \zeta }-1}{2 \zeta}}   + 
    \frac{\nu}{2 \pi}  \sqrt{\frac{ \mathrm{e}^{2 t \zeta}-1}{2 \zeta}} . 
  \end{align*}
  This implies
  \[
    \lVert \delta  \whom \rVert <  \sqrt{2}\nu \left( 1 + \frac{1}{8 \zeta \pi^2}\left( 2 (R + R^{-1} + 1) \right)^2 \lVert f \rVert  
     \left( \frac{\mathrm{e}^{2T \zeta}-1}{2 \zeta}  - T  \right) \right)^{1/2}  . 
  \]
  Now we are in position to estimate $\delta\vb$. Since 
  \begin{multline*}
     \delta\vb = \int_0^T \beta(t) D(T+t) \whom(t)\drm t + \int_0^T \beta(t) D(T+t) \delta \whom(t)\drm t 
     \\
     + \int_0^T \delta\beta(t) D(T+t) \whom(t)\drm t  + \int_0^T \delta\beta(t) D(T+t) \delta \whom(t)\drm t 
     \\
     + \frac{1}{2 \pi i} \int_0^T \beta(t) \int_\Xi \euler^{\tau \lambda} (\lambda - A - \delta A)^{-1}  \delta \whom(t)\drm \lambda \,\drm t \\
     + \frac{1}{2 \pi i} \int_0^T \delta\beta(t) \int_\Xi \euler^{\tau \lambda} (\lambda - A - \delta A)^{-1}   \whom(t)\drm \lambda \,\drm t \\
     + \frac{1}{2 \pi i} \int_0^T \delta\beta(t) \int_\Xi \euler^{\tau \lambda} (\lambda - A - \delta A)^{-1}  \delta \whom(t)\drm \lambda \,\drm t
   \end{multline*} 
   we can again estimate $\lVert \delta\vb \rVert$ using the techniques from above and obtain
   \begin{align*}
     \lVert \delta\vb \rVert &\le  C \nu,
   \end{align*}
   where $C$ is a constant which does not depend on $ \nu$ and which may change from line to line.
   Similarly we obtain 
   \[
   \lVert \delta \yhoms \rVert \le C \nu.  
   \]
   Hence we obtained that for $\nu < 1$, each of  $\lVert \delta \opb \rVert $, $\lVert \delta\vb \rVert $  and  $\lVert \delta \yhoms \rVert $ has an upper bound of the form $C \nu$. We have also proved $\lVert D(t) \rVert  < C(t) \nu$.

   As the solution is given in terms of linear systems \eqref{eq:linsym-mu} and \eqref{eq:linsym-final}, to prove the claim of the theorem it is sufficient to show that the solutions of these systems are stable under perturbations. First note that for a chosen $\mu$ the operator on the left hand side of \eqref{eq:linsym-mu} and \eqref{eq:linsym-final} is bounded and strictly positive and that the same holds for the perturbed right hand side. Moreover, from the estimates obtained above, we see that the perturbation of the left hand side of \eqref{eq:linsym-mu} is given by 
   \[ \mu D(2T) + \delta \opb \]
    and the perturbation of the right hand side of \eqref{eq:linsym-mu} is given by 
    \[\mu S_{2T} \delta \yhoms + \mu D(2T)(\yhoms + \delta \yhoms) + S_T \delta\vb + D(T)(\vb + \delta\vb). \]
    Hence the norms of the perturbations of both left and right hand side of \eqref{eq:linsym-mu} are smaller than $C \nu$ if $\nu$ is small enough. This allows us to apply the standard perturbation theoretic results for the solutions of linear systems (\cite{Nashed1976}, see also \cite[Proposition 4.2]{ChenXue1997}) and conclude that the perturbed system \eqref{eq:linsym-mu} has the solution $x + \delta x$ with $\delta x$ satisfying $\lVert \delta  x\rVert < C \nu $. 
    Let\ $\Phi_{\delta}$ be the perturbed function $\Phi$, $\delta \Phi_{\delta} = \Phi_{\delta} - \Phi$, and let $\mu_{\epsilon}^{\delta}$ be the solution of the equation $\Phi_{\delta}(\mu) = \epsilon$. Then 
    $\Phi_{\delta} (\mu_{\epsilon}^{\delta}) = \lVert \yhoms + \delta\yhoms - x_{\epsilon} - \delta x_{\epsilon} \rVert $, where $x_{\epsilon} + \delta x_{\epsilon}$ is the solution of the perturbed system  \eqref{eq:linsym-mu} with $\mu = \mu_{\epsilon}^{\delta}$. Using the obtained bounds on the perturbations, it follows $ \lvert \delta\Phi_{\delta} (\mu_{\epsilon}^{\delta}) \rvert < C \nu $. Hence 
  \[ \lvert \Phi (\mu_{\epsilon}^{\delta}) - \Phi (\mu_{\epsilon}) \rvert = 
  \lvert \Phi_{\delta} (\mu_{\epsilon}^{\delta}) - \delta \Phi (\mu_{\epsilon}^{\delta}) - \epsilon \rvert = \lvert \delta \Phi (\mu_{\epsilon}^{\delta}) \rvert < C \nu. \]
  Since $\Phi$ is a continuous and monotone function, it follows that $\Phi^{-1}$ is continuous, hence we obtain $\lvert \delta \muhat\rvert < C \nu$. 
\end{proof}
\section{Approximations of semigroups and related operator functions}\label{Sec:rat}
In this section we will review rational approximation methods for a semigroup $S_t$ whose generator $A$ is a self-adjoint operator with upper bound $\kappa \le 0$. Our approach to constructing numerical approximations can be applied to non stable systems (those systems for which $\kappa > 0$) as well, but such systems are not included among our examples, and  we just note that in the case of a non-stable systems the estimates include a multiplicative constant which grows exponentially with $\kappa$.

We say that the function $r$ is a type $(n,m)$ rational function, where $n$ and $m$ are nonnegative integers, if there are polynomials $p$ and $q$ of degrees at most $n$ and $m$, respectively, such that $r=p/q$. Here the degrees $n$ and $m$ need not be optimal. Given a rational function $r$ we have
\begin{equation}\label{sesq}
\aligned
(v,S_t v)-(v,r(A)v)&=\int^{\kappa}_{-\infty}\big(\mathrm{e}^{t\lambda}-
r(\lambda)\big)\,\mathrm{d}(E(\lambda)v,v),
\endaligned    
\end{equation}
where $E(\cdot)$ denotes the spectral measure of the self-adjoint operator $A$ \cite{ReedSimonII,Kato1995}. The support of the spectral measure of any of the operators $tA$, for $t>0$ is contained in $(-\infty, 0]$ and computing similarly as in \eqref{sesq} we obtain the estimate
\begin{equation}\label{SC}
\lVert g(A)v-r(A)v \rVert \leq \lVert g-r \rVert_{L^\infty(-\infty, 0]} \lVert v \rVert ,
\end{equation}
where the function $g$ is measurable with respect to the spectral measure of $A$. When considering numerical efficiency, a key information is if there exists a rational approximation of $g$ with $n$ small.

For the case in which $g(z)=e^{-z}$, it is known, see \cite{CODY196950,Trefethen2006}, that  for each $n$ there exists a unique type $(n,n)$ rational function $r^*_n$ 
which minimizes  $\lVert g-\cdot \rVert_{L^\infty(-\infty,0]}$. Furthermore, there exists a constant $C>0$, independent of $n$, such that $r^*_n$ verifies
\begin{equation}\label{Halphen}
\aligned
\lVert g-&r^*_n \rVert_{L^\infty(-\infty,0]}\\&=\min\{ \lVert g-r \rVert_{L^\infty(-\infty,0]}~:~r \text{ is a type} (n,n) \text { rational function.} \}\\&\leq\frac{C}{H^n}\leq\frac{C}{9.28903^n} .
\endaligned\end{equation}
The number $H$ is known under the name of Halphen constant, see \cite{STAHL2009821}. The rational function $r^*_n$ is the unique minimizer of $\lVert g-r \lVert_{L^\infty(-\infty,0]}$ among $(n,n)$ rational functions. Further, $r^*_n$ does not have zero-pole pairs appearing on the negative real axis. 

For the error analysis of approximations of semi-groups it is particularly convenient if the rational function is representable in the partial fractions form. For constants $r_0$ and $r_i$, $\zeta_i$, $i=1,\cdots,d$ the expression
\[
\widehat{r}(z)=r_0+\frac{r_1}{z-\zeta_{1}}+\cdots\frac{r_d}{z-\zeta_d}
\]
is a partial fractions expansion of the rational function $\widehat{r}$. It has been shown that for $g(z)=e^{-z}$ one can construct, see \cite{Trefethen2006}, a partial fractions expansion of the type $(n,n)$ rational function $\widehat{r}_n$ such that $\lVert g-\widehat{r}_n \rVert_{L^\infty(-\infty,0]}\leq C 3.2^{-n}$. The constant $C>0$ is independent of $n$.
The poles $\zeta_i$ are contained on a hyperbola in a complex plane and the weights are defined by the application of the $n$ point quadrature rule to the Cauchy integral representation of the exponential function with this hyperbola as a contour.

\subsection{Rational function fitting}
\label{sect:rff}
To approximate solutions of the constrained parabolic control problem, we will need rational approximations of slightly more general functions. Let us first note that the optimal approximation result \eqref{Halphen} can be extended, see  \cite{STAHL2009821}, in a slightly modified form to the class of perturbed exponential functions $g$ which can be represented as
\begin{equation*}
g(x)=u_0(x)+u_1(x)\mathrm{e}^{ax}
\end{equation*}
where $u_0$ and $u_1\neq 0$ are arbitrary rational functions and $a<0$. According to \cite[Theorem 1]{STAHL2009821}, for any $n\in\NN$ and a chosen but fixed integer $k$ such that $n-k\geq 0$ there exists a unique rational function $r_{n,n+k}^*$ such that 
\[
r_{n,n+k}^*=\text{arg min}\{\lVert g-r_{n,n+k} \rVert_{L^\infty (-\infty , 0]}~:~ r~\text{ is rational function of type } (n,n+k)\}
\]
and $\lVert g-r^*_{n,n+k} \rVert_{L^\infty(-\infty,0]}\leq C 9.28903^{-n}$.  Further, the results of \cite{CODY196950,STAHL2009821} are existential. A way to construct a rational approximation satisfying  \eqref{Halphen} is to transform the interval $(-1,1]$ to $(-\infty,0]$ and then apply the contour integration technique to the transformed problem, see \cite{Trefethen2006}. This can be achieved by the Moebius transformation $m(z)=9(z-1)/(z+1)$, see \cite{Trefethen2006}. The inverse transformation to $m$ is given by the formula $m^{-1}(z)=-(z+9)/(z-9)$ and it maps $\soi{-\infty}{0}$ to $\soi{-1}{1}$. Then the function to approximate is $g(z)=\mathrm{e}^{m(z)}:(-1,1]\to\mathbb{R}$ and the rational function which approximates $e^z$ is obtained by composing the rational approximant of $g$ with the inverse Moebius transformation. We first loop a finite contour around the interval $(-1,1]$ and then these points get mapped by the Moebius transform into points on a curve looping around the infinite interval $(-\infty,0]$.

Let now $g_1$ and $g_2$ be perturbed exponential functions. We are interested in finding type $(n,n)$ rational approximations of functions of the form $g_1+g_2$, $g_1 g_2$, $g_1/g_2$ and $g_i\circ m$. Obviously, combining rational approximations $r_i$ of $g_i$ is a natural first idea. However, the rational functions $r_\diamond=r_1+r_2$, $r_\diamond=r_1 r_2$ or $r_\diamond=r_1/r_2$ will in general be of a different (component-wise larger) type.

We can however use an approximation approach to truncate the type of the product, sum or a quotient of two rational functions of the type $(n,n)$ to a rational function $\widetilde{r}_\circ$ of the type $(n,n)$ which for given $\texttt{tol}>0$ and an interval $\ci{a}{b}$ satisfies the estimate $\lVert\widetilde{r}_\diamond-r_\diamond  \rVert_{L^2\ci{a}{b}}\leq \texttt{tol} \lVert r_\diamond \rVert_{L^2\ci{a}{b}}$. 

To this end we use the award winning \texttt{rkfit} algorithm from \cite{BK17}. This is the rational Krylov function fitting algorithm which implements the rational functions calculus by working with a representation of a rational function as a transfer function of a pencil of Hessenberg matrices. It performs all ot the aforementioned operations (addition, division, multiplication and composition with a Moebius transformation) stably using only floating point arithmetic. According to \cite{BK17} given a tolerance $\mathtt{tol}$ and the perturbed exponential function $g(x)=u_0(x)+u_1(x)\mathrm{e}^{ax}$, $a<0$ such that $g\in L^2(-\infty,0]$, \texttt{rkfit} algorithm produces a rational function \begin{equation}\label{pf}
r_{RK}(x)=r_0+\frac{r_1}{x-\tilde{\zeta}_1}+\cdots+\frac{r_d}{x-\tilde{\zeta}_d} ,
\end{equation}
in the pole residue form, such that 
\begin{equation}\label{rkf}
\lVert r_{RK}-g \rVert_{L^\infty(-\infty,0]}\leq\mathtt{tol} \lVert g \rVert_{L^2(-\infty,0]} .
\end{equation}
We can now construct the operator $r_{RK}(A):=r_0 I+\sum_{i=1}^dr_i(A-\tilde{\zeta}_i)^{-1}$
such that
\[
\lVert g(A)-r_{RK}(A) \rVert_{L(\cH)}\leq\texttt{tol} \lVert g \rVert_{L^2(-\infty,0]} .
\]
\subsection{Galerkin resolvent estimates}
The steps needed to compute the action of a function of an operator on a vector, exemplary $g(A)v=S_tv$, involve two steps. First, we approximate the function $g$ by a rational function on an interval containing the spectrum of the self-adjoint operator $A$. We then need to sample the resolvent $(z-A)^{-1}v$ at the poles of the rational function $r$. 

In what follows we will restrict our considerations to the operator of the divergence type posed in a compact polygonal domain $\Omega\subset\R^2$. Many statements are algebraic in nature and hold in a more general setting. However, the interpolation results for piecewise polynomial functions and the regularity results for the domain of the operator are specific to the aforementioned class of operators.
 
We approximate the action of the resolvent by selecting a finite dimensional subspace $\mathcal{V}_h\subset\dom(A^{1/2})$ and then forming the Galerkin projection of $A$ onto $\mathcal{V}_h$. According to \cite[Section 5]{Lasiecka1}, the Galerkin projection $A_h:\mathcal{V}_h\to\mathcal{V}_h$ is given by the formula
\[
A_h=(A^{1/2}P_h)^*(A^{1/2}P_h),
\]
where $P_h$ is the orthogonal projection onto $\mathcal{V}_h$. Let $\mathcal{V}_h$ be the space of piece-wise linear, for a given triangular tessellation of $\Omega$, and continuous functions on $\Omega$. The resolvent estimate for $A$ using the Galerkin projection $A_h$ reads (see e.g. \cite{MR4011540} for technical details)
\begin{equation}\label{res_est}
\lVert (z-A)^{-1}v-(z-A_h)^{-1}v \rVert_{L^2(\Omega)}\leq Ch^{2\nu} \lVert v \rVert_{L^2(\Omega)} ,
\end{equation}
for $h<h_0$ and $v\in V_h$. Here $\nu>0$ is a parameter depending on the regularity of the functions in $\dom(A)$ and $h$ is the maximal diameter of a triangle in the chosen tessellation of $\Omega$ and $h_0$ is denoting the minimal level of refinement from which the estimate holds. Note that constants $C$ and $h_0$ do depend on $z$ in an explicit way but do not depend on $v$, see \cite{MR4011540}. We will, however need this estimate solely for at most $d$ poles $\tilde{\zeta}_i$, $i=1,\cdots,d$ of the rational function $r_{RK}$ from \eqref{pf}, and so
\[
\lVert r_{RK}(A)v-r_{RK}(A_h)v \rVert_{L^2(\Omega)}\leq d~C~h^{2\nu} \lVert v \rVert_{L^2(\Omega)} .
\]
Finally, let $g(x)=u_0(x)+u_1(x)\mathrm{e}^{ax}$ be the perturbed exponential function. Based on \eqref{SC}, for a given rational function $r_{RK}$ and $v\in V_h$ we have the estimate
\begin{align*} 
\lVert g(A)v &- r_{RK}(A_h)v \rVert_{L^2(\Omega)}
\\
&\leq \lVert g(A)v-r_{RK}(A)v \rVert_{L^2(\Omega)}+ \lVert r_{RK}(A)v-r_{RK}(A_h)v \rVert_{L^2(\Omega)}\\
&\leq \lVert g-r_{RK} \rVert_{L^{\infty}(-\infty,0]} \lVert v \rVert +dCh^{2\nu} \lVert v \rVert_{L^2(\Omega)} .
\end{align*}
By choosing suitable $r_{RK}$ and $h$, the last estimate ensures a good approximation of $g(A)v$ based on a finite dimensional approximation of  the operator $A$. 
\section{Numerical examples} 
\label{sec:examples}
In this section we consider several constrained optimization problems in 1D and 2D.
The problems are academic and are primarily chosen to test the efficiency of the developed approach. We compare our results with those obtained by other, already existing methods where such a comparison is possible. We will also report the timings as means to get an intuition of the efficiency of implementation. The timings will be reported for the workstation running Intel Core i5 8600K at 3.60 GHz with 24 GB of DDR4 ram. 

In all  examples we take the weight function  of the form $\beta=\Eins_{[T/3,2T/3]}$, while the desired trajectory $w$ is assumed to be time independent. This implies that we want the optimal state to be close to  $w$ for times $t$ between $T/3$ and $2T/3$, while no desired trajectory is prescribed outside this interval. 
With this setting and under the additional assumption that the operator $A$ is strictly negative, the operator $\opb$ and the vector $\vb$ from the main theorem can be computed explicitly as
  \begin{align*}
  \opb &= \alpha I +  \frac{1}{2} A^{-1} S_{2T/3}(I - S_{2T/3}), \quad
  \vb =  A^{-1} S_{T/3} (I - S_{T/3})w .
  \end{align*}
 We can now use spectral calculus to exemplary represent the operator $\Psi$ as
\[\aligned
\Psi &= \alpha I + \int_{\RR}\mathrm{e}^{\lambda~2T/3}~g(\lambda)\,\mathrm{d} E(\lambda)\\
&= \alpha I +\int_{\RR} \mathrm{e}^{\lambda~2T/3} (1/\lambda - \mathrm{e}^{\lambda 2T/3}/\lambda) \,\mathrm{d} E(\lambda) .
\endaligned
\]
The function $\lambda \to 1/\lambda -\mathrm{e}^{\lambda~2T/3}/\lambda$ is obviously the perturbed exponential function for which the rational approximation theory holds (there exists a small degree rational approximation). We can equivalently use rational approximation theory to compute the vector $\vb$. In numerical procedure the first  step is to determine $\mu_\eps$ - the solution to the equation $\Phi(\mu)=\eps$ (cf. \eqref{eq:Phi}). Taking into account the properties  of $\Phi$ (given in Lemma \ref{lem:Phi}), the equation has a unique solution for every $\eps\in (0,\Phi(0))$. Any root finding algorithm based only on function evaluation  can be used to robustly approximate the root $\varepsilon_0$. We use the Brent method as it is implemented in the Matlab's procedure \texttt{fzero}. We keep the convergence criterion for the root finding procedure below the discretization error for the finite element approximation. According to the resolvent analysis, the error in the approximation by a rational function is a lower order perturbation of the system, as compared to the discretization error.

The value of the function $\Phi(\mu)$ is computed by using the rational approximation and the spectral calculus
\[\aligned
(\mu S_{2T} &+  \opb)^{-1} \mu S_{2T} \yhoms \\
&=\int_{-\infty}^0\frac{\mu \mathrm{e}^{2T\lambda}}{\mu \mathrm{e}^{2T\lambda} +\alpha + (1/\lambda -\mathrm{e}^{\lambda~2T/3}/\lambda)} \,\mathrm{d} E(\lambda)\yhoms\\
&\approx r_0\yhoms +\sum_{i=1}^dr_i(\tilde{\zeta}_i-A)^{-1}\yhoms .
\endaligned\]
The function 
\[
g(\lambda)=\frac{\mu \mathrm{e}^{2T\lambda}}{\mu \mathrm{e}^{2T\lambda} +\alpha + (1/\lambda -\mathrm{e}^{\lambda~2T/3}/\lambda)}
\]
is approximated on $(-\infty,0]$ using the rational function $r$ with $18$ pole residue pairs. The approximation $r$ satisfies \eqref{rkf} with $\mathtt{tol}=10^{-15}$, as it is the default for \texttt{rkfit}. The function is a quotient of perturbed exponential functions for which we know that there is a high quality low degree rational approximation. We could compute a rational approximation of $g$ as a quotient of rational approximations, however this rational function could have, in the worst case, double the degree of the best rational approximation of the numerator and denominator. Instead, as discussed in Section \ref{sect:rff}, we choose to approximate the function $g$ directly, as means of keeping the degree of the approximating rational function lower. Note that these approximations are obviously independent of $A$, and hold on $(-\infty,0]$.  

Once, the equation for  $\mu_\eps$ is solved, the optimal initial control $\uhat$ follows by  \eqref{sol_form}. For its computation we explore the same procedure as the one for calculation of $\Phi(\mu)$.

\subsection{1D heat equation}
As the first test of the proposed method we consider the heat equation (with variable coefficient) on $\Omega=[0,\pi]$ accompanied by homogeneous Dirichlet boundary conditions.
The operator $A$  is taken of the form
\[
A=-\partial_x((1+a \Eins_{\left[\gamma,\pi\right]})\partial_x)
\]
with $\gamma=2.2$. The parameter $\gamma$ determines the contact of two materials with a different diffusivity coefficient. We consider two cases: 
\begin{enumerate}
\item $a=0$, with $A$ being the isotropic Laplace operator;
\item $a=-0.8$, resulting in discontinuity of diffusion coefficient at point $\gamma$. 
\end{enumerate}
Operator $A$ is discretized by conforming linear finite elements with $h=1/20$ and we use the lumped mass discretization in order to be able to utilized optimized \texttt{rkfit} library.

Besides the function $\beta$ determined in the beginning of this section, for this example we propose 
\begin{itemize}
\item $\alpha = 10^{-4}$,
\item final time $T=0.01$, 
\item desired trajectory $\omega = \Eins_{[\pi/5,2 \pi/5]}$,
\item final target $y^* = \Eins_{[3 \pi/5, 4 \pi/5]}$,
\item $f=0$ (homogenous equation). 
\end{itemize}
The choice of $w$ stimulates the state trajectory to be concentrated on the left part of the domain during the central time period, while  at the final time the target $y^*$ requires it to be supported at the right hand side, at least for small values of the tolerance $\eps$. 

For the isotropic case $a=0$, the above setting coincides with Example 4.1 from \cite{LazarMP-17}. In such a way we shall be able to compare our results with those obtained by a different method based on spectral decomposition of the Laplace operator. 

The example is performed for three values of the final tolerance 
\[\eps=[0.2, 0.5, 0.9] \Phi(0),\]
depicted on Figure \ref{Phi}, together with the corresponding values $\mu_\eps$ (solutions to equation \eqref{eq:Phi}) and graph of function $\Phi$ (in $\log-\log$ scale). 
The figure confirms the properties of function $\Phi$ provided by  Lemma \ref{lem:Phi}. Specially, its initial value $\Phi(0)$ coincides with 
\[\lVert \ytilde - y^* \rVert =1.0374\]
where $\ytilde$ is the optimal final state of the unconstrained problem.  The corresponding initial value $\utilde$,  which is just the minimizer of functional $J$ can also be obtained by standard methods of convex analysis. 

\begin{remark}
Note that a gradient method for computing the minimizer of $J$ requires a solution of the forward problem for the parabolic equation. A basic step of any implicit method for the solution of a parabolic equation is the evaluation of a resolvent like function. The convergence of a gradient method with Nesterov's acceleration is at best $1/n^2$, where $n$ is the number of forward problem solves. On the other hand, our method is based on utilizing functional calculus and the best rational function approximations of operator functions. In consequence, these operator functions can be approximated with a method that converges at the rate of at least $9^{-n}$, where $n$ is the number of the resolvent evaluations. This is a very crude comparison. However, under a very modest assumption that we need at least one evaluation of the resolvent function per forward problem solve, we clearly see a potential advantage of the the rational function approach. This is the reason why such methods are becoming methods of choice for the solution of parabolic problems and also for numerically inverting the Laplace transform in the case of the solution of inverse problems, see \cite{STAHL2009821}. 
\end{remark} \medskip
The elapsed time to produce the plot which included sampling $\Phi$ in $350$ points was $12.97$ seconds and it took $0.36$ seconds to compute $\Psi(0)$ alone.

\begin{figure}[ht]
	\centering
	 \subfloat[{Unisotropic difusion}]{\includegraphics[width = 5cm]{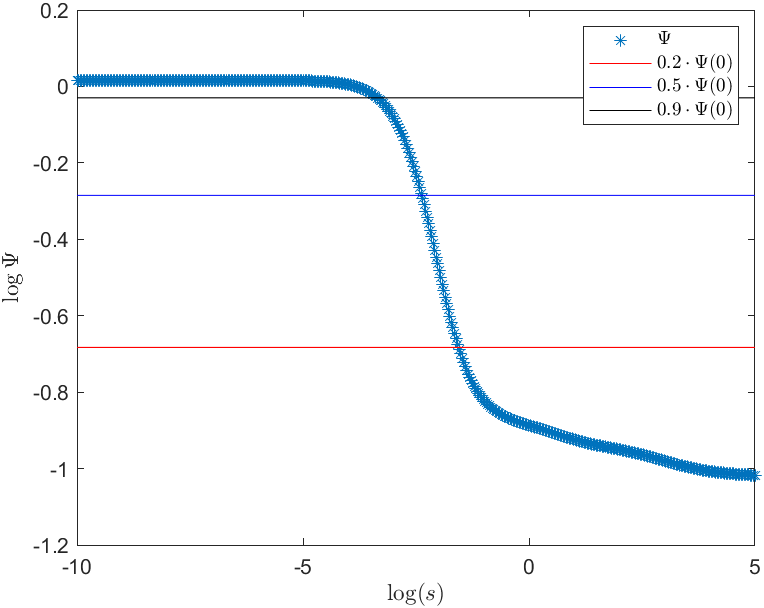}}
	\subfloat[{Isotropic difusion}]{\includegraphics[width = 5cm]{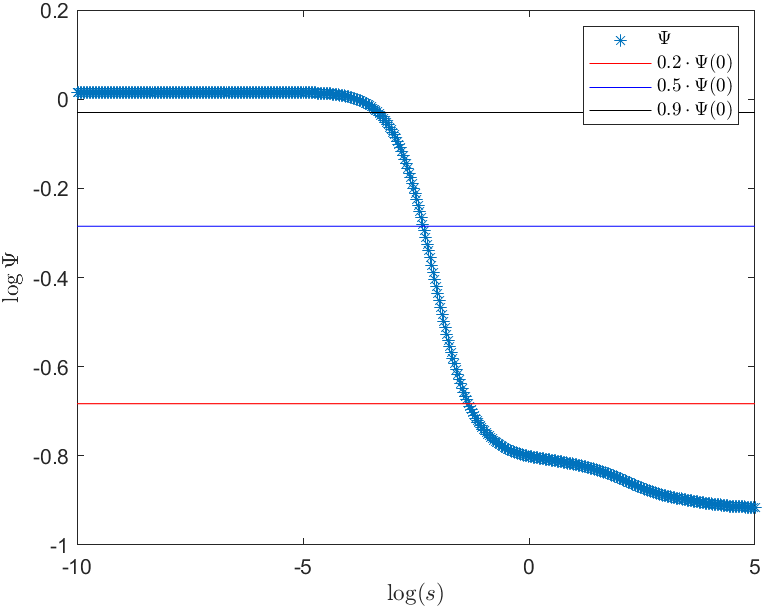}}
	\caption{Function $\Phi$, the chosen values of $\eps$ and corresponding $\mu_\eps$. }
\label{Phi}
\end{figure}
The results in the isotropic case for the prescribed values of $\eps$ are presented in Figure \ref{Fig1D}. 
\begin{figure}[ht]
	\centering
	\includegraphics[width = 4cm]{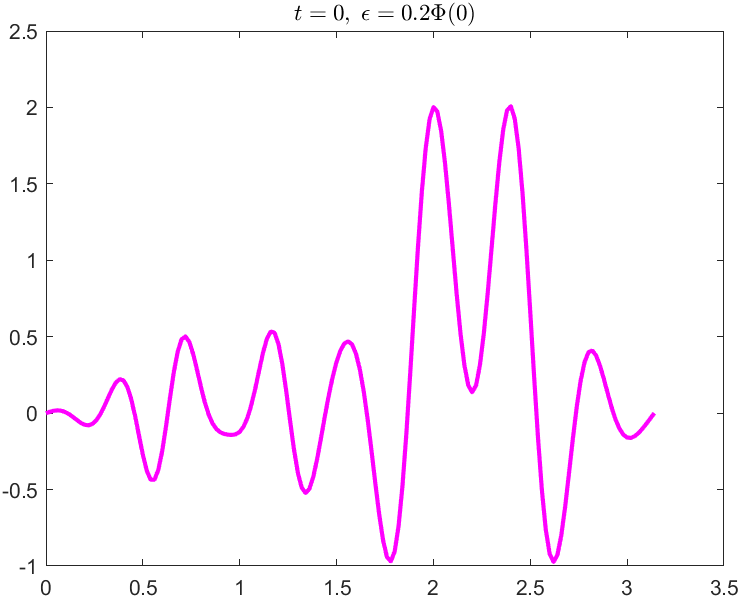}
	\includegraphics[width = 4cm]{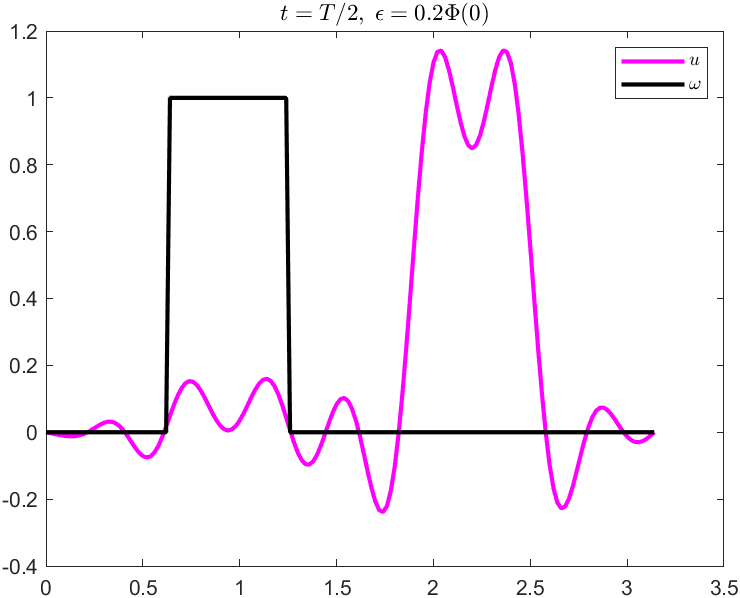}
	\includegraphics[width = 4cm]{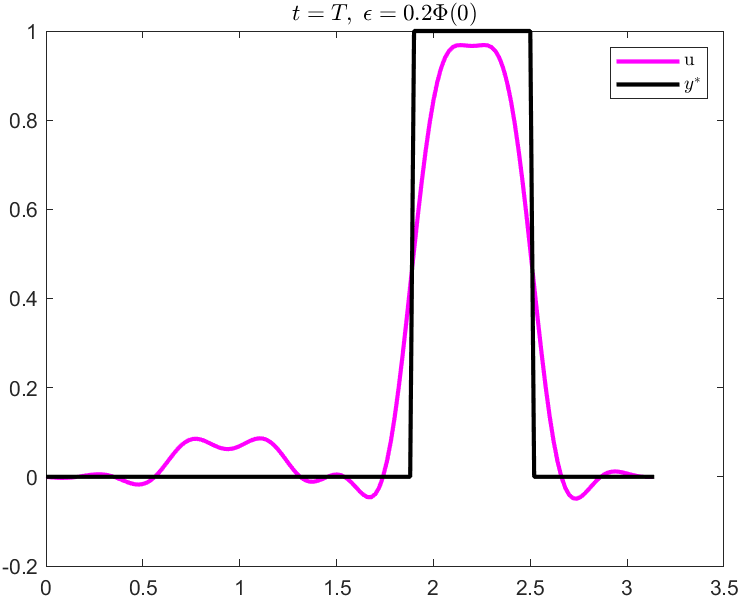}\\
	\includegraphics[width = 4cm]{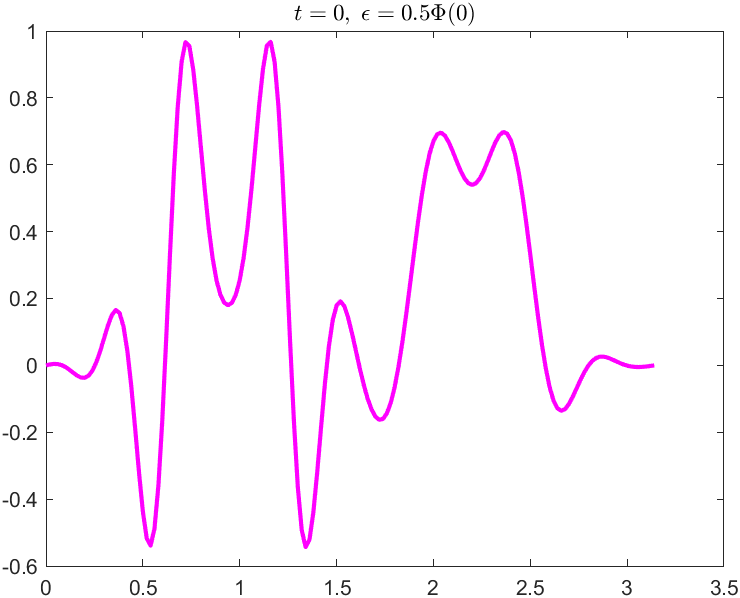}
	\includegraphics[width = 4cm]{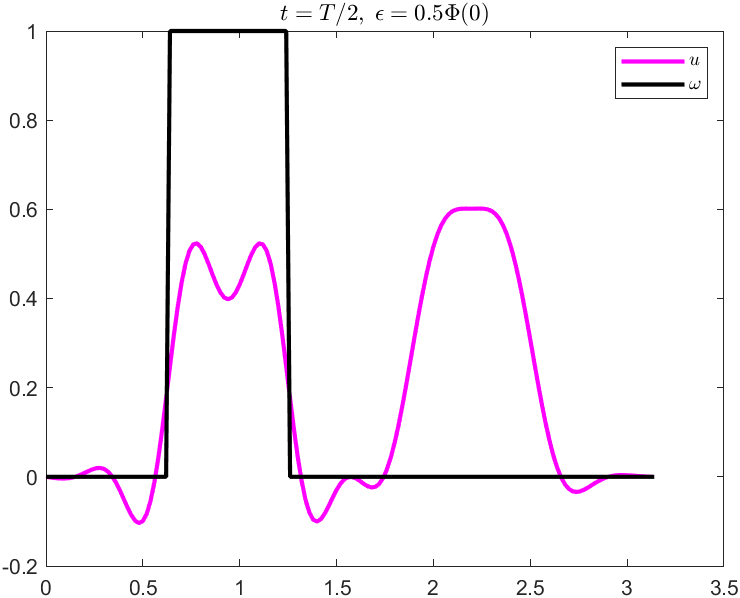}
	\includegraphics[width = 4cm]{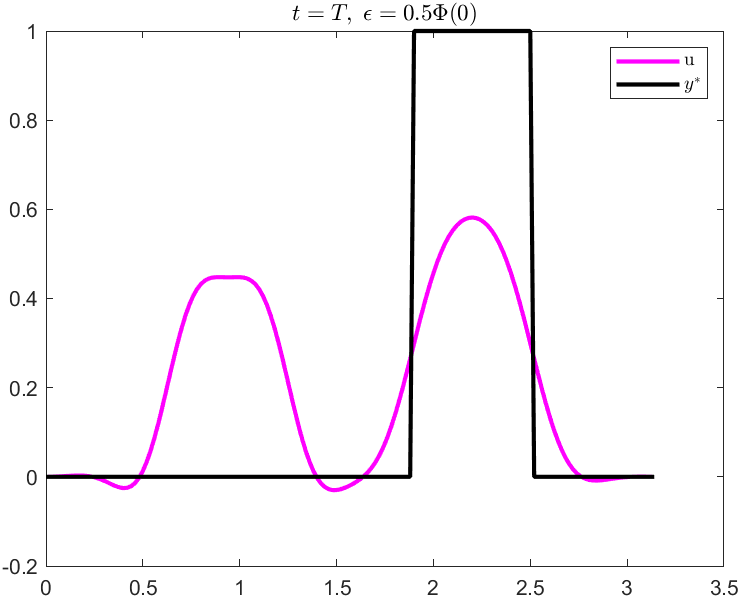}\\
	\includegraphics[width = 4cm]{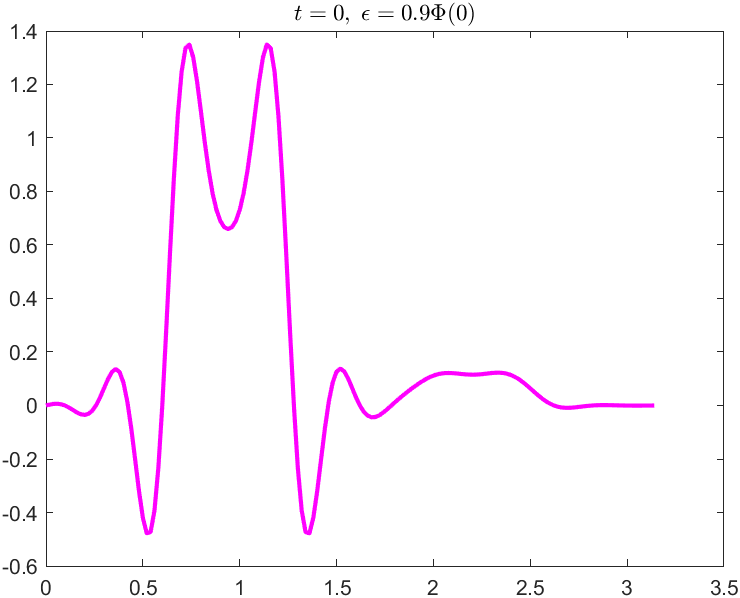}
	\includegraphics[width = 4cm]{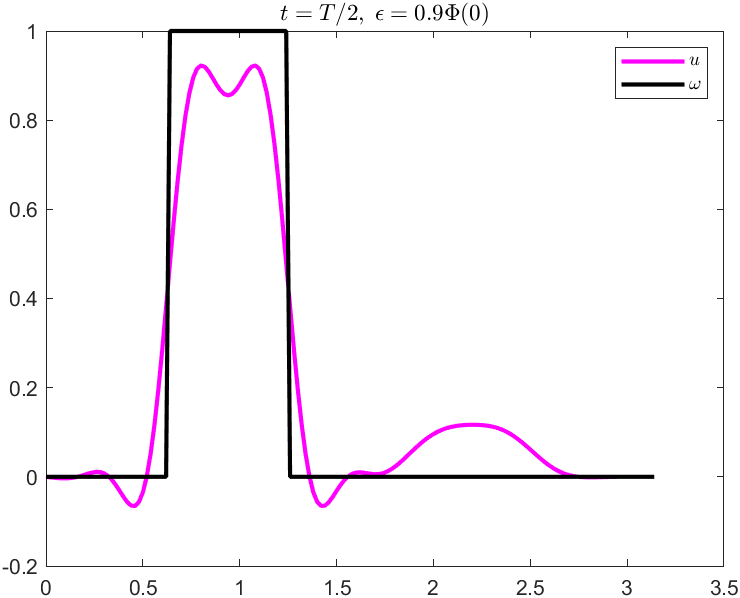}
	\includegraphics[width = 4cm]{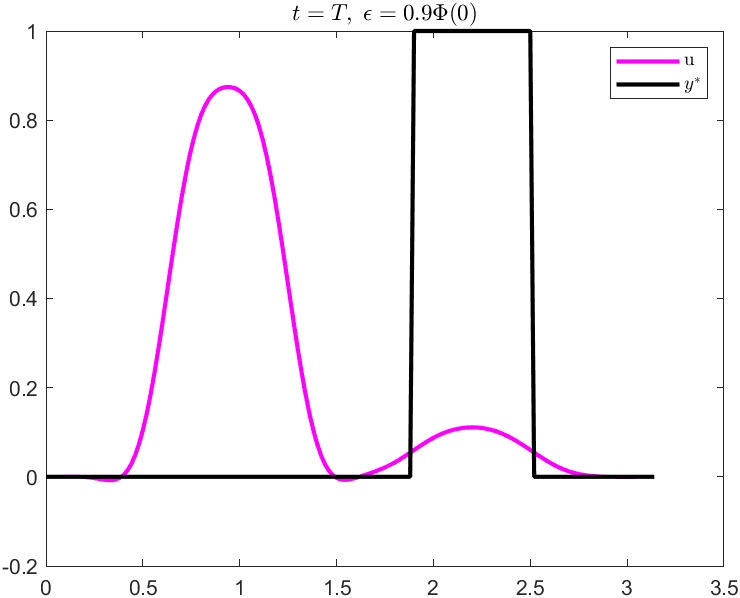}\\
		\caption{Example 5.1, isotropic case. The initial control $u=y(0)$ (left), the computed solution at time $t=T/2$ compared with the desired trajectory $\omega$ (middle), and the optimal final state at $t=T$ compared with the target $y^*$ (right) for three different values of the tolerance $\epsilon$.}
	\label{Fig1D} 
\end{figure}
When $\eps$ is small  the initial mass is concentrated on the support of the  target $y^*$,  in order to steer the system close to it at the final time. 
On the opposite, for large value of $\eps$, the initial control is concentrated on the left. In such a way the solution stays close to the desired trajectory $\omega$ in the middle part of the time interval 
during which the distributed cost $\beta$ is active. Finally, the intermediate value of $\eps$ is a trade-off between the optimisation of the cost functional $J$ and the requirement to hit the final target with the given tolerance. 

Besides agreeing with the intuition, the results completely coincide with those obtained in \cite[Example 4.1]{LazarMP-17}. This provides the first confirmation of the  method proposed in this article. Furthermore, we note that for increased  tolerance $\eps$ the optimal control resembles  the solution of the unconstrained problem $\utilde$ (Figure \ref{sol-unconstr}), where the latter is just a minimizer of functional $J$. This is expected, as for large values of $\eps$ the solution is less affected by the prescribed target $y^*$, while the unconstrained problem is completely independent of it.
\begin{figure}[ht]
	\centering
\includegraphics[width = 6cm]{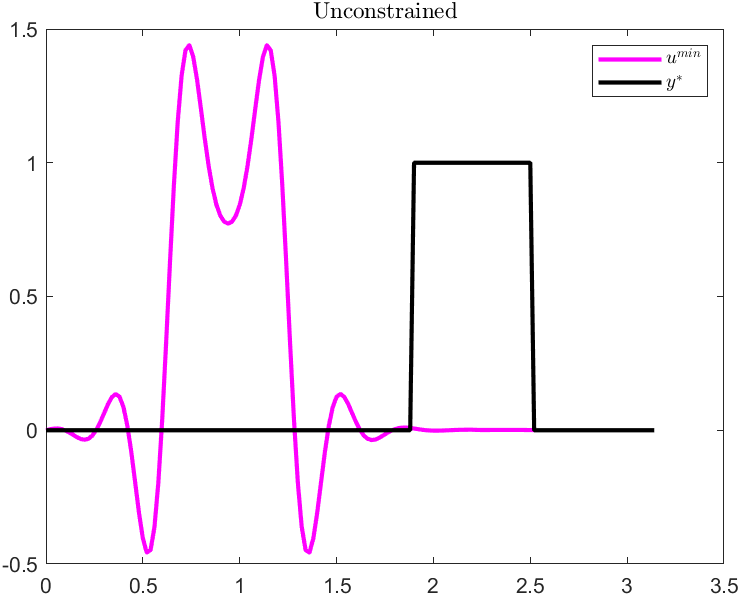}
	\caption{Example 5.1, unisotropic case. The plot of $\utilde$ and $y^*$.}
\label{sol-unconstr} 
\end{figure}
The results for the discontinuous diffusion and for the same range of the final tolerance are presented in Figure \ref{Fig1D-disc}. 
\begin{figure}[ht]
	\centering
	\includegraphics[width = 4cm]{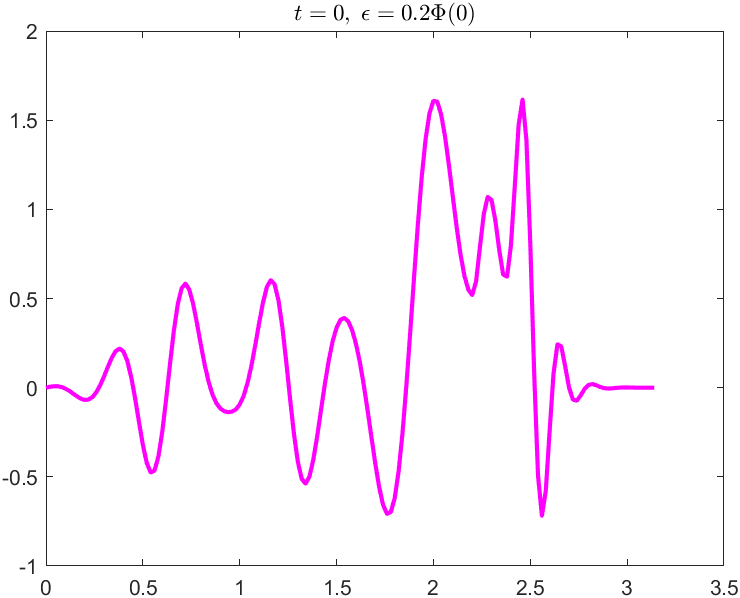}
	\includegraphics[width = 4cm]{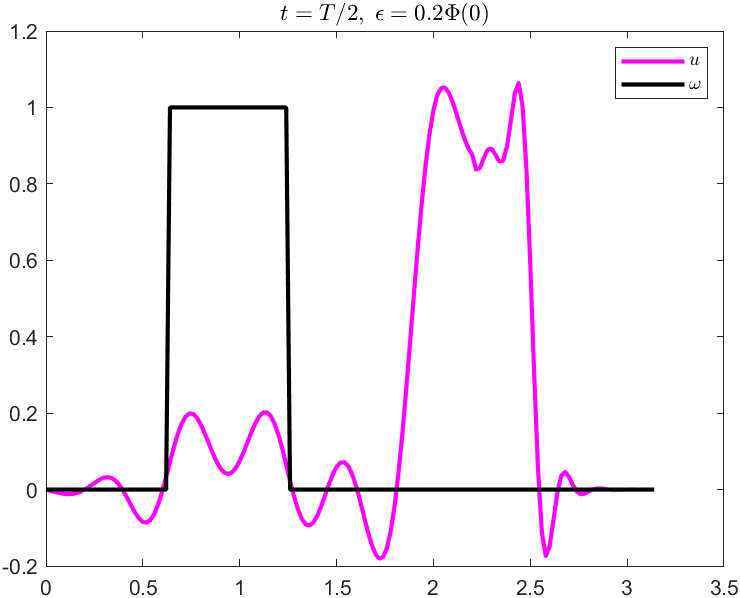}
	\includegraphics[width = 4cm]{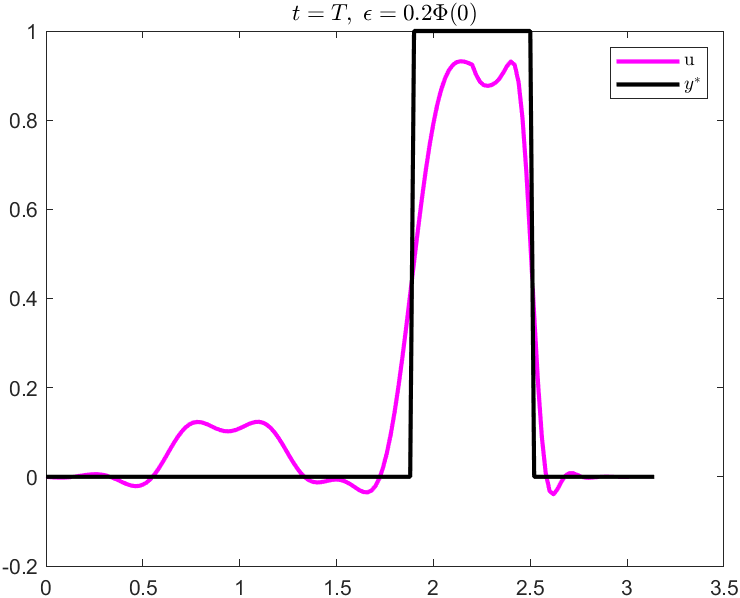}\\
	\includegraphics[width = 4cm]{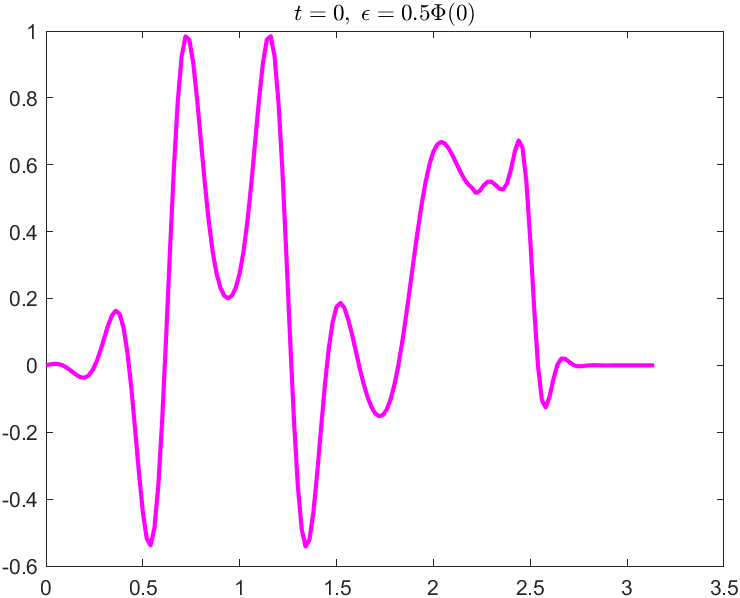}
	\includegraphics[width = 4cm]{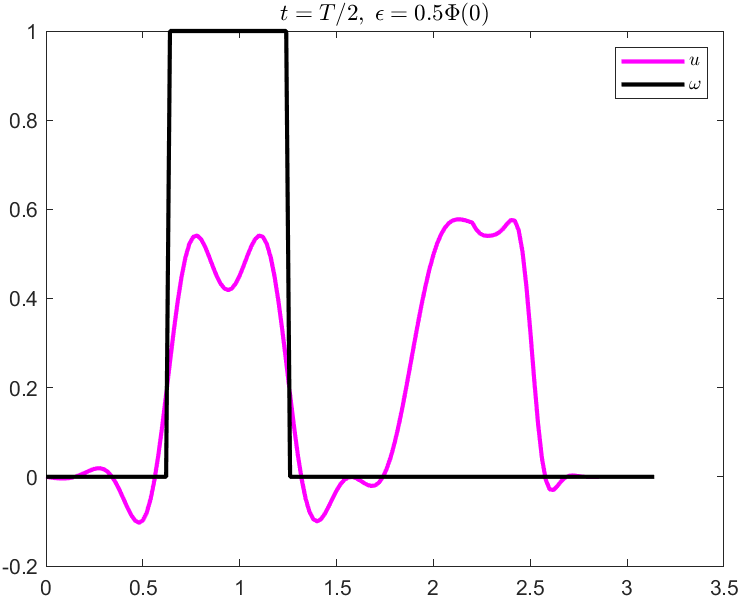}
	\includegraphics[width = 4cm]{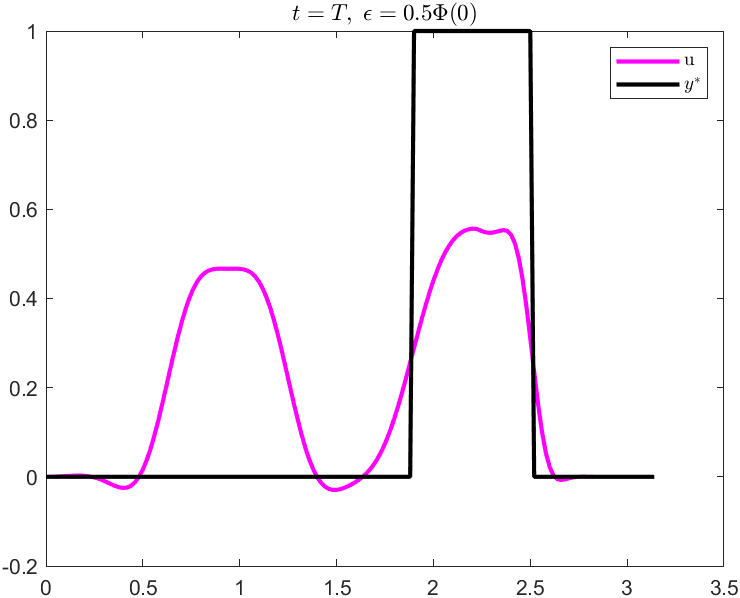}\\
	\includegraphics[width = 4cm]{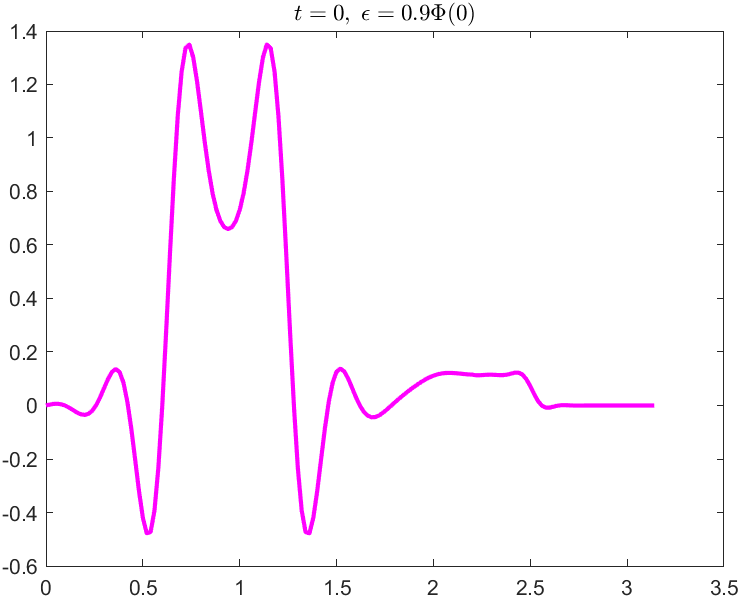}
	\includegraphics[width = 4cm]{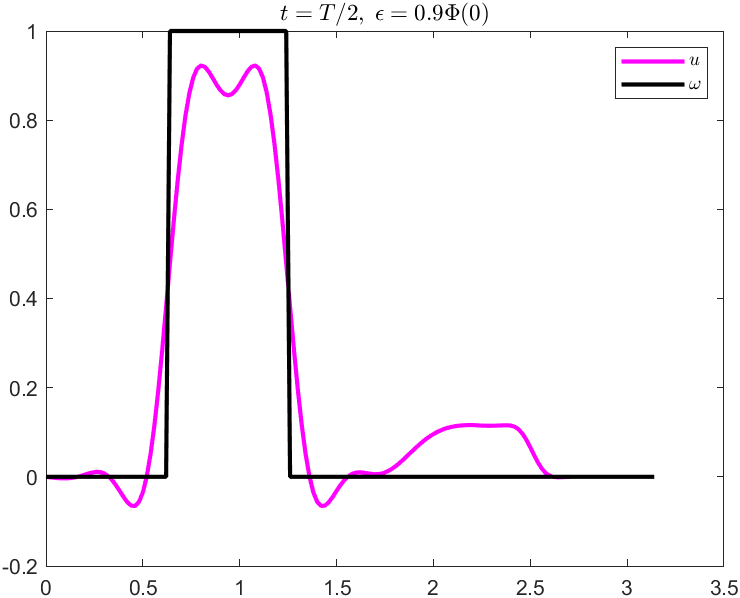}
	\includegraphics[width = 4cm]{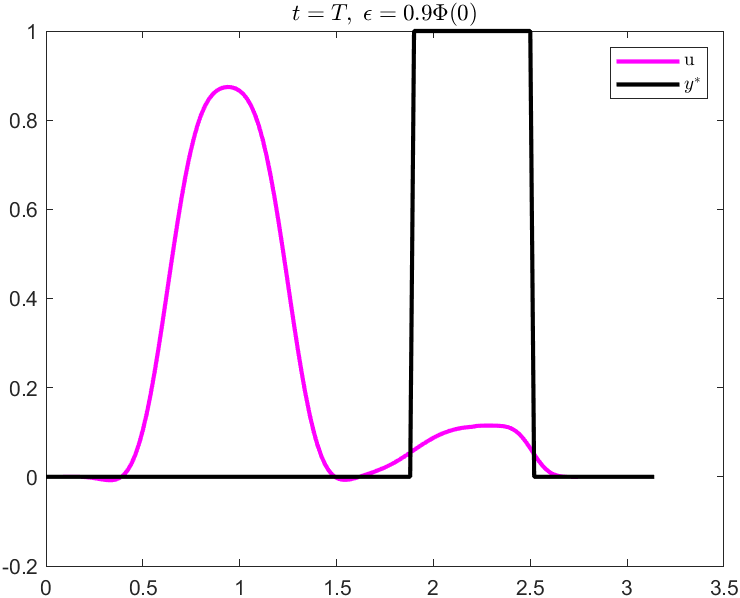}\\
	\caption{Example 5.1, discontinuous diffusion. The initial control $u=y(0)$ (left), the computed solution at time $t=T/2$ compared with the desired trajectory $\omega$ (middle), and the optimal final state compared with the target $y^*$ (right) for three different values of the tolerance $\epsilon$.}
	\label{Fig1D-disc} 
\end{figure}
In their main features, the results coincide with those obtained in the case of the constant diffusion coefficient. The novelty is broken symmetry of the solution in the right part of the domain, where the discontinuity occurs. As a consequence, the center of initial mass is slightly shifted  rightward, where  diffusion processes are slower. This is logical, having in mind that in this region the initial mass can better approximate the characteristic function (of the support of $y^*$) during a larger period of time, as small diffusion rate will not modify its form significantly. 


\subsection{2D heat equation on irregular domain}
In the next example we repeat the same calculation in the 2D setting. To this end we use the Dirichlet Laplace operator defined on the L-shape domain $\Omega=\left[-1,1\right]^2\setminus(\left[-1,0\right]\times\left[0,1\right])$. We will use Lagrange P1 elements (i.e. we approximate using piecewise linear and continuous functions). We have used a shape regular mesh with $h=1/30$, also with the lumped mass discretization of the semigroup, and we choose $T=1/20$. Due to the reentrant corner of the L-shaped domain we have a loss of regularity of the functions in $\dom(A)$ and so the resolvent estimate \eqref{res_est} holds with $\nu$, $0<\nu<1$. In the case of $H^2$-regular solutions we would have $r=1$. 

For the target data we choose 
\begin{itemize}
\item $\omega(x)=\Eins_{\lVert x-x_0 \rVert_1\leq 0.2}$,
\item $y^*(x)=\mathrm{e}^{-20 \lVert x-x_1 \rVert^2} + \mathrm{e}^{-20 \lVert x-x_2 \rVert^2} + \mathrm{e}^{-30 \lVert x-x_3 \rVert^2}$,
\end{itemize}
with $x_0=(-0.5,-0.5)$, $x_1=(0.5,0.5)$, $x_2=(0.6,0.1)$ and $x_3=(0.8,0.4)$ (Figure \ref{2D-tar-fig}). The other parameters are the same as in the previous example.

\begin{figure}[ht]
	\centering
	\includegraphics[width = 6cm]{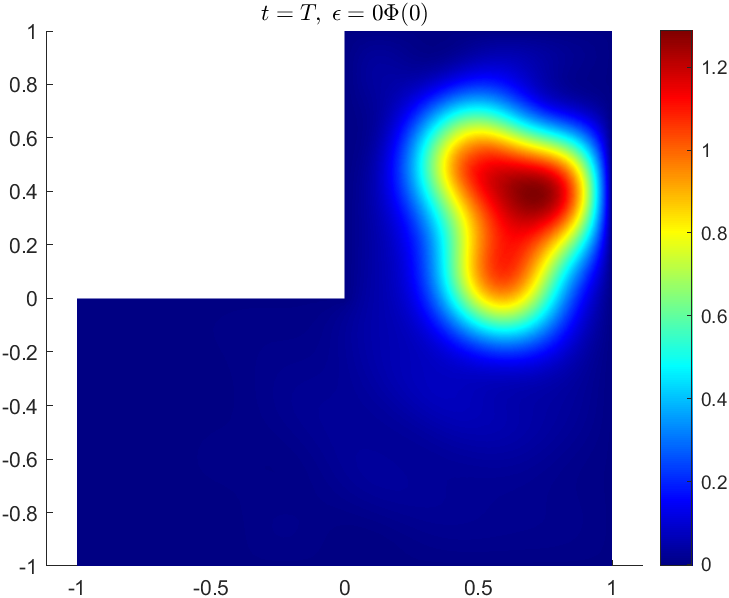}
		\caption{Example 5.2. The prescribed final target $y^*$.}
	\label{2D-tar-fig}
\end{figure}


The results for the three values of the final tolerance $\eps=[0.1, 0.5, 0.9] \Phi(0)$ are displayed in Figure \ref{2D-fig}. We show  the solutions' snapshots at $t=0, T/2, T$.
\begin{figure}[ht]
	\centering
	\includegraphics[width = 4.2cm]{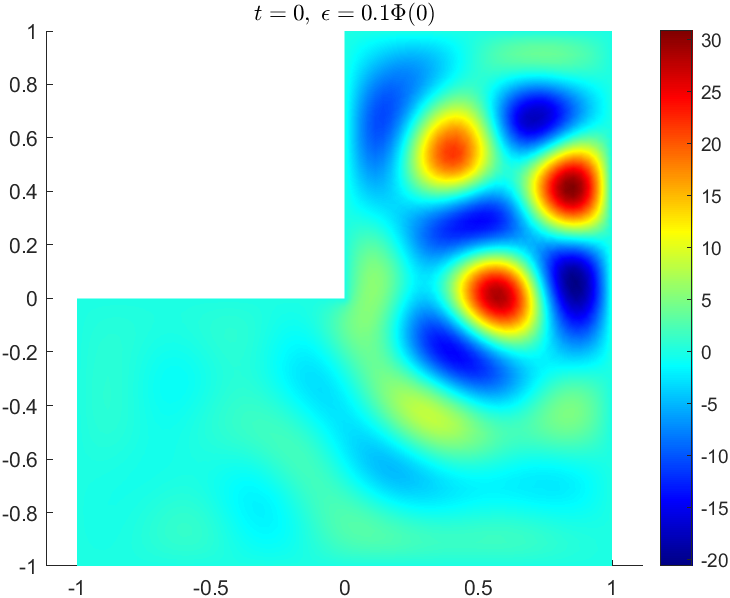}
	\begin{tikzpicture}
	\node[inner sep=0pt] (initial) at (0,0)
	{\includegraphics[width = 4.2cm]{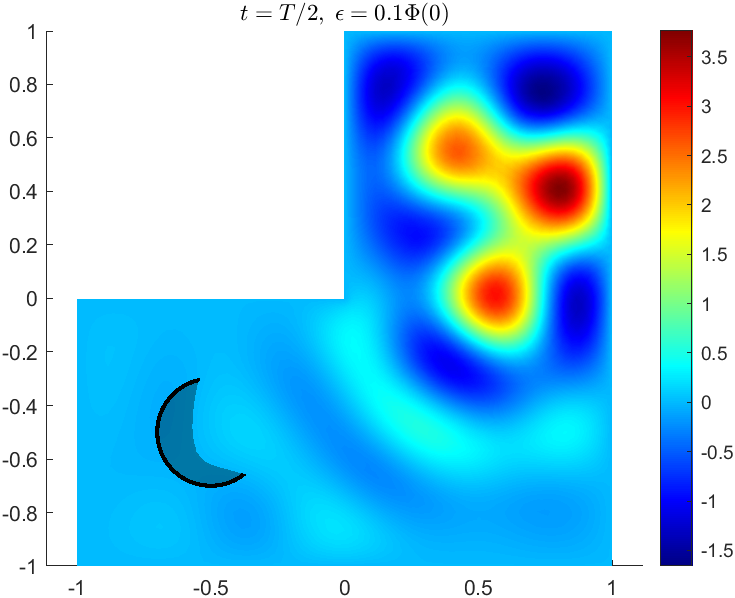}};
	\draw[red,thick,dashed] (-0.9,-0.77) circle (0.31cm);
	\end{tikzpicture}
	\includegraphics[width = 4.2cm]{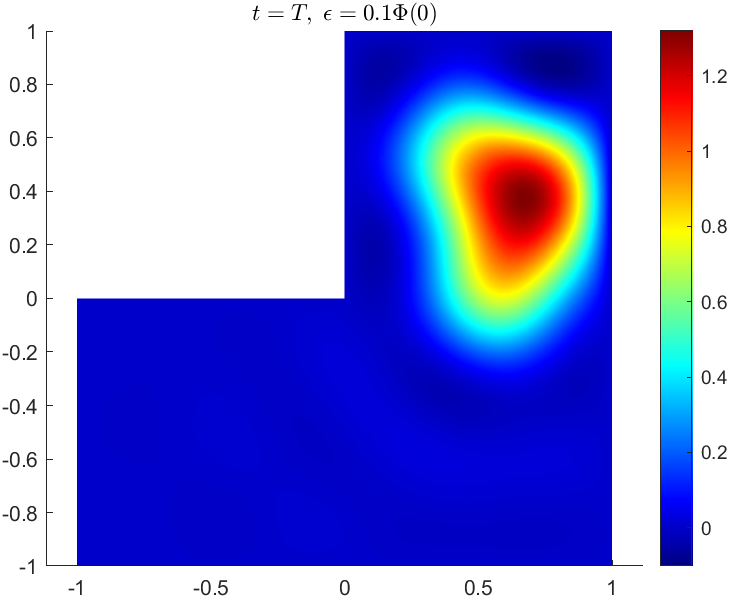}\\
	\includegraphics[width = 4.2cm]{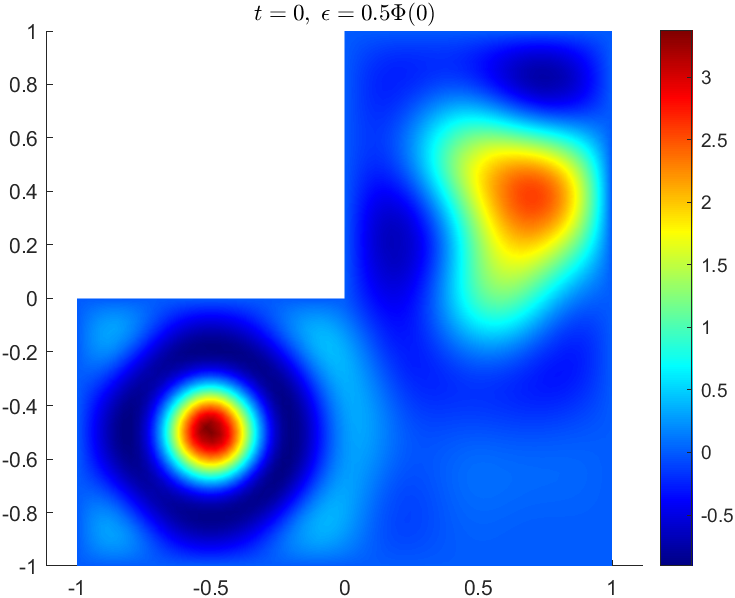}
	\begin{tikzpicture}
	\node[inner sep=0pt] (initial) at (0,0)
	{\includegraphics[width = 4.2cm]{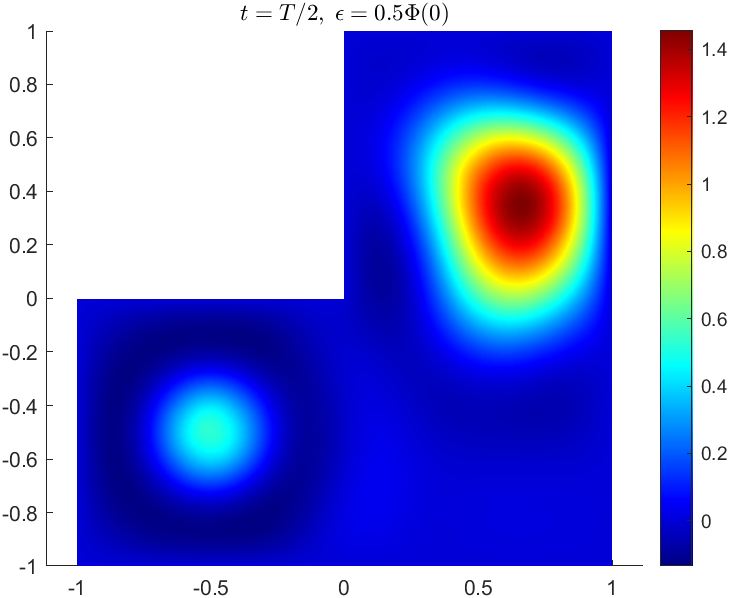}};
	\draw[red,thick,dashed] (-0.9,-0.77) circle (0.31cm);
	\end{tikzpicture}
	\includegraphics[width = 4.2cm]{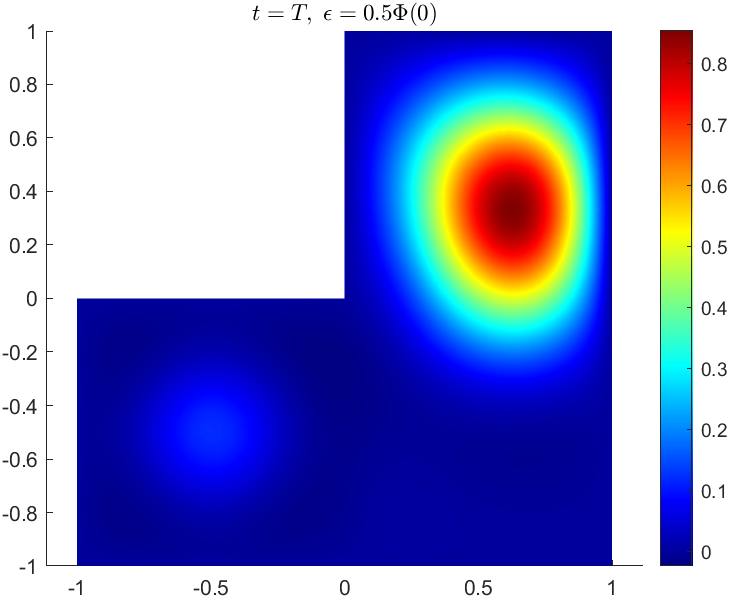}\\
	\includegraphics[width = 4.2cm]{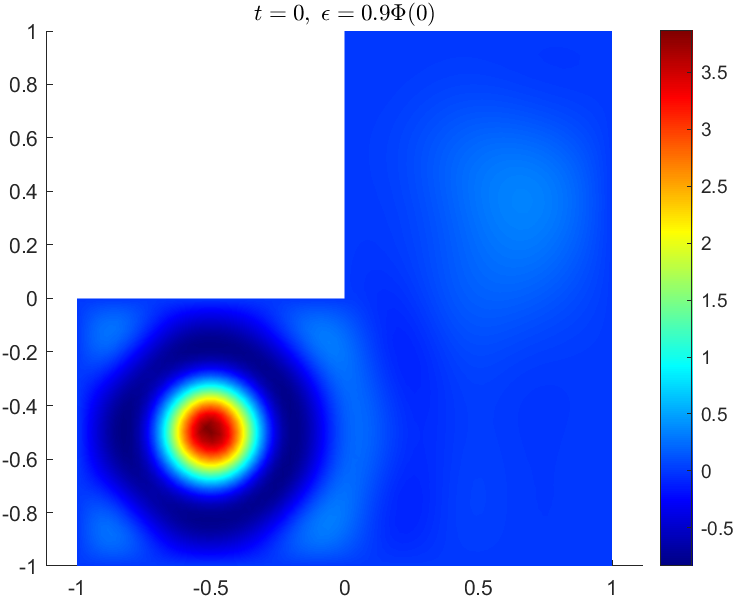}
	\begin{tikzpicture}
	\node[inner sep=0pt] (initial) at (0,0)
	{\includegraphics[width = 4.2cm]{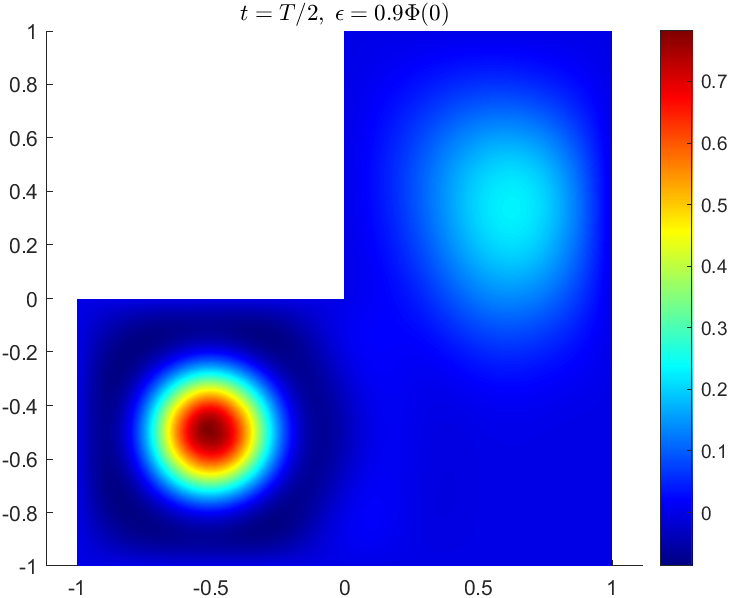}};
	\draw[red,thick,dashed] (-0.9,-0.77) circle (0.31cm);
	\end{tikzpicture}
	\includegraphics[width = 4.2cm]{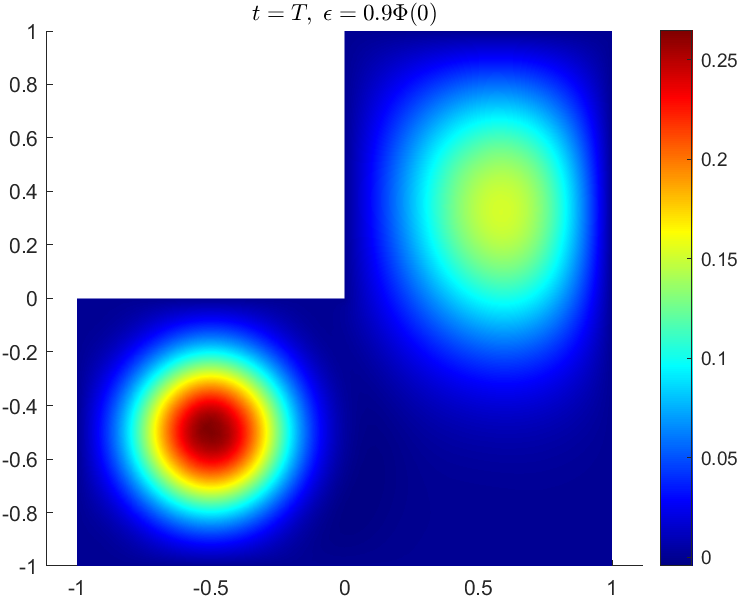}\\
	\caption{Example 5.2. The initial control $u=y(0)$ (left), the computed solution at time $t=T/2$ compared with the desired trajectory $\omega$ (middle), and the optimal final state (right) at $t=T$ for three different values of the tolerance $\epsilon$. The red dashed circle marks the constraint $\omega$ on the trajectory.}
	\label{2D-fig}
\end{figure}

The first row depicts evolution of the state for small tolerance $\eps$. The initial control steers the system close to the prescribed target $y^*$ (cf. Figure \ref{2D-tar-fig}) at the final time, while there is no coincidence with $\omega$ in the between period. For the tolerance $0.5$ of the range of $\Phi$ equal importance is assigned both to $\omega$ and the final state. The largest tolerance allows the solution to optimize the given cost functional almost independently of the prescribed target $y^*$. 

Essentially, the results  exhibit the same behavior as those obtained in the previous examples.  The elapsed time for computing $\Psi(0)$ -- the unconstrained problem -- is $0.9828$ seconds. This  demonstrates efficiency and flexibility of the method in 2D and, in particular, in case of irregular domains. We chose to exemplary report the timing for $\Psi(0)$, since in this case it is possible to compute the value of $\Psi$ with other methods such as those which are based on gradient optimization.

\section{Conclusion}

In this paper we have constructed and implemented a numerical algorithm for a constrained optimal control problem. The problem consists of identifying an initial datum that minimizes a given cost functional and steers the system at the final time within a prescribed distance from the target. The algorithm results in an (almost explicit) formula for the solution, expressed in terms of the operator governing the system. The formula itself was derived  previously (cf. \cite {LazarMP-17}), but its implementation was based on spectral decomposition, which requires knowledge or construction of eigenfunctions of the operator.

The main novelty of this article is twofold. Firstly, we provide a complete quantified sensitivity analysis of the solution with respect to all the data entering the problem. In particular, it implies a good approximation of the solution in cases where the operator or the external source are not completely determined. Secondly, for the numerical implementation we  explore efficient Krylov subspace techniques that allow us to approximate a complex function of an operator by a series of linear problems. We provide a-priori estimates for the approximation that are not sensitive to any particular spatial discretization, and neither to a matrix representation of the operator $A$. 
The theoretical results are confirmed by numerical examples. The first and the simplest example coincides with the one analysed in \cite {LazarMP-17}, and the results obtained with two approaches are in complete agreement. The following, more complex examples confirm the good performance of the algorithm in the case of operators with variable coefficients and acting on irregular domains.

The proposed approach can be generalised to other optimal control problems. The first step in this direction would be to consider a distributed control problem, i.e. one in which a control enters the equation through a non-homogeneous term and is active along the entire time frame. This would also allow for boundary control problems which, by using the classical Fattorini's approach \cite {F68}, can be expressed as distributed ones. The second generalisation would consider different norms that enter into the cost functional. In particular, it would be tempting to include $L^1$-terms in the cost, since these introduce sparsity into the control. Of course, such a generalisation requires a more subtle theoretical analysis, as the cost functional is not differentiable in this case. This approach would  also enable to consider different non-smooth, convex functionals.  



\section*{Acknowledgment}
The work of L.G.\ has been supported by Hrvatska Zaklada za Znanost (Croatian Science Foundation) under the grant  IP-2019-04-6268 - Randomized low rank algorithms and applications to parameter dependent problems. The work of M.L.\ and I.N.\ has been supported by Hrvatska Zaklada za Znanost (Croatian Science Foundation) under the grant  IP-2016-06-2468 - Control of Dynamical Systems.

\end{document}